\title{Extended block Hessenberg  process for the evaluation of matrix functions}
\author{
A. H. Bentbib\thanks{Laboratory LAMAI, Faculty of Sciences and Technologies, Cadi Ayyad
University, Marrakech, Morocco. E-mail: a.bentbib@uca.ac.ma}
\and M. El Ghomari\thanks{Department of Mathematics, Ecole Normale Supérieure, Mohammed V University in Rabat, Av. Mohammed Belhassan El Ouazzani, Takaddoum, B.P.
5118, Rabat, Morocco. E-mail: m.elghomari10@gmail.com}
\and K. Jbilou\thanks{Laboratory LMPA, 50 Rue F. Buisson, ULCO Calais cedex, France, and
Laboratory CSEHS, University UM6P, Bengu\'erir, Morocco.
E-mail: Khalide.Jbilou@univ-littoral.fr}
\and El. M. Sadek\thanks{Laboratory of Engineering Sciences for Energy, National School of Applied Sciences of El Jadida, University Chouaib Doukkali, Morocco.  E-mail: sadek.e@ucd.ac.ma,  sadek.maths@gmail.com}
}
\newcommand{\R}{{\mathbb R}}
\newcommand{\E}{\widetilde{E} }
\newcommand{\e}{\widetilde{e} }
\newcommand{\Eh}{\widehat{E} }
\newcommand{\s}{\sum\limits }
\newcommand{\V}{\widetilde{V} }
\newcommand{\Vm}{\mathbb{V}}
\newcommand{\lu}{{\rm lu}}
\newcommand{\Hm}{\widetilde{\mathbb{H}}}
\newcommand{\Tm}{\mathbb{T}_{2m}}
\newcommand{\Sm}{\mathbb{S}_{2m}}
\begin{document}
	
	
\maketitle

\begin{abstract}
In the present paper, we propose a block variant of the extended Hessenberg process for computing approximations of matrix functions and other problems producing large-scale matrices. Applications to the computation of a matrix function such as $f(A)V$, where $A$ is an $n\times n$ large sparse matrix, $V$ is an $n\times p$ block with $p\ll n$, and $f$ is a function are presented. Solving  shifted linear systems with multiple right hand sides are also given. Computing approximations of these matrix problems appear in many scientific and engineering applications. Different numerical experiments are provided to show the effectiveness of the proposed method for these problems.
\end{abstract}

\begin{keywords}
extended Krylov subspace, matrix function, shifted linear system, Hessenberg process.
\end{keywords}

\section{Introduction}\label{Intro}
Let $A\in\R^{n\times n}$ be a large and sparse matrix, and let $V\in\R^{n\times p}$ with $p\ll n$. We are interesting in approximating numerically expressions of the form
\begin{align}\label{If}
    \mathbb{I}(f):=f(A)V,
\end{align}

where $f$ is a function that is defined on the convex hull of the spectrum of $A$. Evaluation of expressions \eqref{If} arises in various applications such as in network analysis when $f(t)=exp(t)$  \cite{Estrada}, in machine learning when $f(t)=log(t)$ \cite{NBS,HAN}, in quantum chromodynamics when computing Schatten p-norms, when $f(t)=t^{p/2}$, for some $0<p\leq 1$;  \cite{BGMSUX,SCS}, and in the solution of ill-posed problems \cite{FRRS,Hansen}. The matrix function $f(A)$ can be defined by the spectral factorization of $A$ (if exists) or using other techniques; see, e.g., \cite{Higham} for discussions on several possible definitions of matrix functions.  In many applications, the matrix $A$ is so large, the computation of $f(A)$ is not feasible. Several projection methods have been developed \cite{AHJ,Bentbib,Bentbib1,Datta,Datta2,HJ,HJMT,FJ}. These projection methods are based on the variants of standard (or block) Arnoldi and Lanczos techniques by using  rational, or extended Krylov subspaces. In the context of approximating the expressions of the form $f(A)b$ for some vector $b\in\R^n$, Druskin et al. \cite{DK,KS} have proposed the extended Arnoldi process when $A$ is nonsingular. This subspace is determined by both positive and negative powers of $A$. It was shown that the approximation of $\eqref{If}$ using the extended Krylov subspaces is more accurate than the approximation using polynomial Krylov subspaces.

\noindent We also concerned with the solution of shifted linear systems with multiple right hand sides of the form
\[
(A+\sigma I_n)X^{\sigma}=C,
\]
which needs to  be solved for many values of $\sigma$, where $C\in\mathbb{R}^{n\times p}$. The shifts are distinct from the eigenvalues of $A$. The solution $X^{\sigma}$ may be written as $X^{\sigma}:=(A+\sigma I)^{-1}C\equiv f(A)C$, with $f(z)=(z+\sigma)^{-1}$ is the resolvent function. In this paper, we present the extended block Hessenberg method with pivoting strategy, for approximating $f(A)V$ and also for solving linear systems with multiple right hand sides. The method presented generalizes the Hessenberg method with pivoting strategy discussed in \cite{AHS,ATG,RT}, which use (standard) block Krylov subspaces, to allow the application of extended block Krylov subspaces. The latter spaces are the union of a (standard) block Krylov subspace determined by positive powers of $A$ and a block Krylov subspace defined by negative powers of $A$.

 \noindent The rest of the paper is organized as follows. In the next section, we  introduce the extended block Hessenberg process with some properties. In section $3$, we describe the application of this process to the approximation of the matrix functions of the form \eqref{If}. The solution of the shifted linear system with multiple right hand sides by using the proposed method is presented in Section $4$. Finally, Section $5$ is devoted to
numerical experiments.

\section{The extended block Hessenberg process}\label{section:ebhp}

Let $A\in\mathbb{R}^{n\times n}$ be the nonsingular matrix and the block vector $V\in\R^{n\times p}$. The extended block Krylov subspace $\mathbb{K}^e_m(A,V)$ is the subspace of $\mathbb{R}^n$ generated by the columns of the blocks $A^{-m}V,\ldots,A^{-1}V,V,\ldots,A^{m-1}V$. This subspace is defined by
\begin{equation}\label{Kme}
\mathbb{K}^e_m(A,V)={\rm range}\{V,A^{-1}V\ldots,A^{m-1}V,A^{-m}V\}\subset \R^{n\times s}.
\end{equation}
The extended block subspace can be considered as a sum of two block Krylov subspaces. The first one is related to the pair $(A,V)$ while the second one is related to $(A^{-1},A^{-1}V)$.

Similar to the classical Hessenberg process with pivoting strategy \cite{Wilkinson}, the extended block Hessenberg method generates a unit lower trapezoidal basis  $\mathbb{V}_{2m}=\{V_1,\ldots, V_{2m}\}\in\R^{n\times 2mp}$ for the extended block subspace $\mathbb{K}^e_m(A,V)$, with the $V_i$'s are block vectors of size $n\times p$. The  $[v_1,\ldots,v_m]\in\R^{n\times m}$ is said to be  \cite{Sadok} unit lower trapezoidal matrix, if the first $k-1$ components of $v_k$ equal to zero and the $k$-th component of $v_k$ equal to one. We apply a LU decomposition with  partial pivoting (PLU decomposition) of the block vector $V$, we obtain $P_1V=L_1\Gamma_{1,1}$, where $P_1\in\R^{n\times n}$ is a permutation matrix, $L_1\in\R^{n\times p}$ is a unit trapezoidal matrix and $\Gamma_{1,1}\in\R^{p\times p}$ is an upper triangular matrix. Then the first block vector $V_1$ can be obtained as follows
\begin{align}\label{ComputeV1V2}
    V_1=P^T_1L_1=V\Gamma_{1,1}^{-1}.
\end{align}

\noindent $\Gamma_{1,1}$ and $V_1$ can be computed using the $\lu$ Matlab function $[V_1,\Gamma_{1,1}]=\lu(V).$  The $\lu$ Matlab function  applied to the matrix $V$  returns a permuted lower triangular matrix $V_1$ and an upper triangular matrix $\Gamma_{1,1}$ such that $V=V_1\Gamma_{1,1}$. 

\noindent The block vector $V_2$ is obtained as follows
\begin{align}\label{V2}
    \widetilde{V}_2=A^{-1}V-V_1\Gamma_{1,2}, \quad V_2=P^T_2L_2=\widetilde{V}_2\Gamma_{2,2}^{-1},
\end{align}
where the matrices $P_2\in\R^{n\times n},$ $L_2\in\R^{n\times p}$ and $\Gamma_{2,2}\in\R^{p\times p}$ are obtained by applying the PLU decomposition to $\widetilde{V}_2.$ $\Gamma_{2,2}$ and $V_2$ are computed by using  $[V_2,\Gamma_{2,2}]=\lu(\widetilde{V}_2).$  Let $[p_1,p_2]=\{i_1,\ldots,i_{2p}\}$ be the set of indices $i_j$, with $i_j$ is the index of the row of $[V_1,V_2]$ corresponding to the $j$-th row of $[L_1,L_2]$, where $p_1^T,p_2^T\in\R^p$ and let $[\E_1,\E_2]=[\e_{i_{1}},\ldots,\e_{i_{2p}}]$, where $\e_i$ is the $i$-th vector of the canonical basis of $\R^n$. $\E_1$ and $\E_2$ are made up of the first $p$ columns of $P_1$ and $P_2$; respectively.  $\Gamma_{1,2}\in\R^{p\times p}$ is determined so that $\widetilde{V}_2\bot\widetilde{E}_1$. Thus,
\begin{align}\label{gamma12}
\Gamma_{1,2}=\widetilde{E}_1^TA^{-1}V=V_1(p_1,:)^{-1}(A^{-1}V)(p_1,:).
\end{align}

\noindent To compute the block vectors $V_{2j+1}$ and $V_{2j+2}$ for $j=1,\ldots,m$, we use the following formulas
\begin{equation}\label{blockvectorsHessenberg}
\begin{array}{ccccccll}
\V_{0,2j+1}&=&AV_{2j-1},& \text{ and } &\V_{i,2j+1}&=&AV_{2j-1}-\sum\limits_{k=1}^{i}V_kH_{k,2j-1},& i=1,\ldots,2j,\\
\V_{0,2j+2}&=&A^{-1}V_{2j},& \text{ and } &\V_{i,2j+2}&=&A^{-1}V_{2j}-\sum\limits_{k=1}^{i}V_kH_{k,2j}& i=1,\ldots,2j+1,
\end{array}
\end{equation}
where, the $p\times p$ square matrices $H_{1,2j-1},\ldots,H_{2j,2j-1}$ and $H_{1,2j},\ldots,H_{2j+1,2j}$ are determined so that
\begin{align}\label{Galerkin}
\V_{2j+1}\bot\, \E_1,\ldots,\E_{2j},\text{ and } \V_{2j+2}\bot\, \E_1,\ldots,\E_{2j+1}.
\end{align}
Thus,  $H_{1,2j-1},\ldots,H_{2j,2j-1}$ and $H_{1,2j},\ldots,H_{2j+1,2j}$ are written as

\begin{equation}\label{HHH}
\begin{array}{lcr}
  H_{k,2j-1}&=&\widetilde{E}_k^T\V_{k-1,2j+1}=(V_k(p_k,:))^{-1}\V_{k-1,2j+1}(p_k,:),\\
  H_{k,2j}&=&\widetilde{E}_k^T\V_{k-1,2j+2}=(V_k(p_k,:))^{-1}\V_{k-1,2j+2}(p_k,:),
\end{array}
\end{equation}

where $p_{k}=i_{p(k-1)+1},\ldots,i_{pk}$ be the vector of indices $i_j$, with $i_j(j=p(k-1)+1,\ldots,kp)$ is the index of the row of $V_k$ corresponding to the $j$-th row of $L_k$ and $\E_k=[\e_{i_{p(k-1)+1}},\ldots,\e_{i_{pk}}].$

We apply the PLU decomposition to $\V_{2j,2j+1},$ we get $P_{2j+1}\V_{2j,2j+1}=L_{2j+1}H_{2j+1,2j-1}$. Then the block vector $V_{2j+1}$ is
\[
V_{2j+1}=P^T_{2j+1}L_{2j+1}=\V_{2j,2j+1}H^{-1}_{2j+1,2j-1}.
\]

We apply again the PLU decomposition to $\V_{2j+1,2j+2},$ we get $P_{2j+2}\V_{2j+1,2j+2}=L_{2j+2}H_{2j+2,2j}$. Then the block vector $V_{2j+2}$ is
\[
V_{2j+2}=P^T_{2j+2}L_{2j+2}=\V_{2j+1,2j+2}H^{-1}_{2j+2,2j}.
\]

\noindent The pairs $(V_{2j+1},H_{2j+1,2j-1})$ and $(V_{2j+2},H_{2j+2,2j})$ are obtained by using $\lu$ Matlab function, i.e.,
\[
[V_{2j+1},H_{2j+1,2j-1}]=\lu(\widetilde{V}_{2j,2j+1}),\quad [V_{2j+2},H_{2j+2,2j}]=\lu(\widetilde{V}_{2j+1,2j+2}).
\]

\begin{algorithm}
\caption{The extended block Hessenberg algorithm with pivoting strategy  $(EBHA)$}\label{algorithm:EBHA}
\textbf{Inputs:} Nonsingular matrix $A$, initial block $V$, and an integer $m$.
\begin{enumerate}
    \item $[V_1,\Gamma_{1,1}]=\lu(V)$; $[\sim,p_1]=\max{(V_1)}$;
    \item $\Gamma_{1,2}=V_1(p_1,:)^{-1}A^{-1}V(p_1,:);$\quad $\widetilde{V}_2=A^{-1}V-V_1\Gamma_{1,2};$
    \item $[V_2,\Gamma_{2,2}]=\lu(\widetilde{V}_2)$; $[\sim,p_2]=\max{(V_2)}$;
    \item For $j=1:m$
    \begin{enumerate}
        \item $\V=AV_{2j-1}$;
        \item For $i=1:2j$\\
        $H_{i,2j-1}=(V_i(p_i,:))^{-1}\V(p_i,:)$;\\
        $\V=\V-V_iH_{i,2j-1}$;\\
        EndFor
        \item $[V_{2j+1},H_{2j+1,2j-1}]=\lu(\V)$;
        \item $[\sim,p_{2j+1}]=\max{(V_{2j+1})}$;
        \item $\V=A^{-1}V_{2j}$;
        \item For $i=1:2j+1$\\
        $H_{i,2j}=(V_i(p_i,:))^{-1}\V(p_i,:)$;\\
        $\V=\V-V_iH_{i,2j}$;\\
        EndFor
        \item $[V_{2j+2},H_{2j+2,2j}]=\lu(\V)$;
        \item $[\sim,p_{2j+2}]=\max{(V_{2j+2})}$;
        \item EndFor
    \end{enumerate}
    \end{enumerate}
\end{algorithm}

The extended block Hessenberg process with partial pivoting (EBHA) is summarizing in Algorithm \ref{algorithm:EBHA}. We notice that the vectors $\{p_1,p_2,\ldots,p_{2m+2}\}$ defined in \eqref{blockvectorsHessenberg} relations can be computed using the \textbf{$\max$} MATLAB function as shown in Algorithm \ref{algorithm:EBHA} (Lines (2),(3.d) and (3.h)). The systems of equations with the matrix $A$ in Algorithm \ref{algorithm:EBHA} (Lines $(1)$ and $(3.e)$) are solved by LU factorization and by using the backslash operator of Matlab, See the EBHA code in Appendix section. Now, we compute the operation requirements for the EBHA in Algorithm \ref{algorithm:EBHA}. We recall the elementary flops:
\begin{itemize}
    \item $AV_{2j-1}$ requires $C_1^{Nz,p}=pNz$, where $Nz$ is the number of nonzero elements of matrix $A$.
    \item $A^{-1}V_{2j}$ requires $C_2^{n,p}=n(n+1)p.$
    \item The LU-factorization of some matrix of size $n\times p$ requires $C_3^{n,p}=p^2(n-p/3).$
    \item The computation of $H_{i,j}$ requires $C_4^p=5p^3/3+p^2$ (we assume that it is computed by means LU-factorization of $V_i(p_i,:)$).
    \item The computation of $V_i H_{i,j}$ requires $C_5^{n,p}=np^2.$
\end{itemize}

Then, the EBHA Algorithm requires \begin{align*}
Fl(EBHA)&=C_3^{n,2p}+\sum\limits_{j=1}^m[C_1^{Nz,p}+\sum\limits_{i=1}^{2j}(C^p_4+C^{n,p}_5)+C^{n,p}_3+C_2^{n,p}+\sum\limits_{i=1}^{2j+1}(C^p_4+C^{n,p}_5)+C^{n,p}_3],\\
&=mpNz+n(n+1)pm+[4p^2(n-2p/3)+mp^2(n-p/3)]+p^2m(3n+5p+3)(2m+3)/3.
\end{align*}

We next discuss some useful properties of the  extended block Hessenberg process.  Here and below we will tacitly assume that the number of steps of the  extended  block Hessenberg process is small enough to avoid breakdown. This is the generic situation; breakdown is very rare. Then Algorithm \ref{algorithm:EBHA} determines a $(2m+1)p\times (2mp)$ upper block Hessenberg matrix $\Hm_{2m}=[H_{i,j}]$ with $H_{i,j}\in\R^{p\times p}$.

Now, define the permutation matrix by $\mathbb{P}_{2m}=(\E_1,\ldots,\E_{2m})\in\R^{n\times 2mp}$ and set 
\begin{align}\label{PV}
    \mathbb{L}_{2m}=\mathbb{P}_{2m}^T\Vm_{2m}\in\R^{2mp\times 2mp},
\end{align}
according to \eqref{Galerkin}, the matrix $\mathbb{L}_{2m}$ is a unit lower triangular matrix. Let $\Vm_{2m}^L$  be the left inverse of $\mathbb{V}_{2m},$ defined by
\begin{align}\label{VL2m}
\Vm_{2m}^L=\mathbb{L}_{2m}^{-1}\mathbb{P}_{2m}^T\in\R^{2mp\times n}.
\end{align}

\noindent It is easy to see that $\Vm_{2m}^L\Vm_{2m}=I_{2mp},$ by using the equation \eqref{PV}. We now introduce the $2mp\times 2mp$ matrix given by
\begin{align}\label{TmDefinition}
\mathbb{T}_{2m}=[T_{i,j}]_{i,j=1}^{2m}=\mathbb{V}_{2m}^L A\mathbb{V}_{2m}\in\R^{2mp\times 2mp},
\end{align}
with $T_{i,j}=\Vm_{2m}^L(p(i-1)+1:ip,:)A\Vm_{2m}(:,p(j-1)+1:jp)\in\R^{p\times p}$, $i,j=1,\ldots,m$.

\noindent Define the following matrix as 
\[
\mathbb{V}_{2m+1}^L=\mathbb{L}_{2m+1}^{-1}\mathbb{P}^T_{2m+1}\in\R^{(2m+1)p\times n},
\]
where
\[
\mathbb{L}_{2m+1}=\mathbb{P}^T_{2m+1}\mathbb{V}_{2m+1}\in\R^{(2m+1)p\times (2m+1)p}\text{ and } \mathbb{P}_{2m+1}=[\widetilde{E}_1,\ldots,\widetilde{E}_{2m},\widetilde{E}_{2m+1}]\in\R^{n\times (2m+1)p}.
\]
\begin{proposition}
Assume that $m$ steps of Algorithm \ref{algorithm:EBHA} have been run and let $\widetilde{\mathbb{T}}_{2m}=\mathbb{V}_{2m+1}^LA\mathbb{V}_{2m}\in\R^{(2m+1)p\times 2mp}$, with $\mathbb{V}_{2m+1}=[V_1,\ldots,V_{2m},V_{2m+1}]\in\mathbb{R}^{n\times (2m+1)p}$, then we have the following relation
\begin{equation}\label{decompositionAV}
\begin{array}{rl}
A\mathbb{V}_{2m}&=\mathbb{V}_{2m+1}\widetilde{\mathbb{T}}_{2m}\\
&=\mathbb{V}_{2m}\mathbb{T}_{2m}+V_{2m+1}\tau_m E^T_m,
\end{array}
\end{equation}
where the matrix $E_m=[e_{2p(m-1)+1},\ldots,e_{2mp}]\in\R^{2mp\times 2p}$ is made up of the last  $2p$ columns of the identity matrix $I_{2mp}$ with $e_i$ is the $i$-th vector of the canonical basis of $\R^{2mp}$, $\tau_m=\begin{bmatrix}T_{2m+1,2m-1},~
T_{2m+1,2m}\end{bmatrix}$.
\end{proposition}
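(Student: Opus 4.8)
The plan is to read off from the recurrences \eqref{blockvectorsHessenberg}--\eqref{HHH} the two \emph{Hessenberg relations}
\[
AV_{2j-1}=\sum_{k=1}^{2j+1}V_kH_{k,2j-1},\qquad A^{-1}V_{2j}=\sum_{k=1}^{2j+2}V_kH_{k,2j},
\]
which follow by taking $i=2j$ (resp. $i=2j+1$) in \eqref{blockvectorsHessenberg}, moving the leftover term to the left, and inserting $\widetilde{V}_{2j,2j+1}=V_{2j+1}H_{2j+1,2j-1}$ and $\widetilde{V}_{2j+1,2j+2}=V_{2j+2}H_{2j+2,2j}$. I would then treat the two equalities of \eqref{decompositionAV} in turn.

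For the first equality I would verify that every block column of $A\mathbb{V}_{2m}$ lies in $\mathrm{range}(\mathbb{V}_{2m+1})$. For an odd column $AV_{2j-1}$ this is immediate from the first relation above since $2j+1\le 2m+1$. For an even column I would use the nesting $A\,\mathbb{K}^e_j(A,V)\subseteq\mathbb{K}^e_{j+1}(A,V)$ coming from \eqref{Kme}: writing $V_{2j}$ in the power basis $\{V,\dots,A^{j-1}V,A^{-1}V,\dots,A^{-j}V\}$ and applying $A$, the only power that can leave $\mathbb{K}^e_j(A,V)$ is $A^{j-1}V\mapsto A^{j}V$, which is spanned by $V_1,\dots,V_{2j+1}$; hence $AV_{2j}\in\mathrm{range}(\mathbb{V}_{2j+1})\subseteq\mathrm{range}(\mathbb{V}_{2m+1})$. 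Thus $A\mathbb{V}_{2m}=\mathbb{V}_{2m+1}M$ for some $M\in\R^{(2m+1)p\times 2mp}$, and multiplying on the left by $\mathbb{V}_{2m+1}^L$, using $\mathbb{V}_{2m+1}^L\mathbb{V}_{2m+1}=I_{(2m+1)p}$ (the same computation that gave $\mathbb{V}_{2m}^L\mathbb{V}_{2m}=I_{2mp}$ after \eqref{VL2m}, now with index $2m+1$), identifies $M=\mathbb{V}_{2m+1}^LA\mathbb{V}_{2m}=\widetilde{\mathbb{T}}_{2m}$.

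For the second equality I would split $\mathbb{V}_{2m+1}=[\mathbb{V}_{2m},\,V_{2m+1}]$ and identify the top $2mp$ rows and the last $p$ rows of $\widetilde{\mathbb{T}}_{2m}$ separately. The key observation is that the Galerkin conditions \eqref{Galerkin} force $V_{2m+1}\perp\widetilde{E}_1,\dots,\widetilde{E}_{2m}$, so the off-diagonal block $\mathbb{P}_{2m}^TV_{2m+1}$ vanishes and, by \eqref{PV},
\[
\mathbb{L}_{2m+1}=\begin{bmatrix}\mathbb{L}_{2m}&0\\[2pt]\ast&\widetilde{E}_{2m+1}^TV_{2m+1}\end{bmatrix}
\]
is block lower triangular. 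Inverting this and multiplying by $\mathbb{P}_{2m+1}^T$ shows that the first $2mp$ rows of $\mathbb{V}_{2m+1}^L$ coincide with $\mathbb{L}_{2m}^{-1}\mathbb{P}_{2m}^T=\mathbb{V}_{2m}^L$, so the top $2mp$ rows of $\widetilde{\mathbb{T}}_{2m}=\mathbb{V}_{2m+1}^LA\mathbb{V}_{2m}$ are exactly $\mathbb{V}_{2m}^LA\mathbb{V}_{2m}=\mathbb{T}_{2m}$ by \eqref{TmDefinition}.

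It then remains to treat the last block row, whose $j$-th block $T_{2m+1,j}$ is, by the first equality, the coefficient of $V_{2m+1}$ in the expansion of $AV_j$. Repeating the span count above gives $AV_j\in\mathrm{range}(\mathbb{V}_{2m-1})$ for every $j\le 2m-2$, whence $T_{2m+1,j}=0$ there, while the columns $j=2m-1$ and $j=2m$ may contribute; stacking this last row $\tau_mE_m^T$ beneath $\mathbb{T}_{2m}$ yields
\[
A\mathbb{V}_{2m}=[\mathbb{V}_{2m},\,V_{2m+1}]\begin{bmatrix}\mathbb{T}_{2m}\\[2pt]\tau_mE_m^T\end{bmatrix}=\mathbb{V}_{2m}\mathbb{T}_{2m}+V_{2m+1}\tau_mE_m^T.
\]
The main obstacle is precisely this last step: one must argue that $AV_{2m}$ genuinely reaches $V_{2m+1}$, so that $\tau_m$ is really $2p$ wide and not narrower. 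This hinges on $V_{2m}$ carrying a nonzero $A^{m-1}V$ component, inherited from the subtraction of $V_{2m-1}\sim A^{m-1}V$ in its recurrence \eqref{blockvectorsHessenberg}, whereas all earlier even columns $AV_{2j}$ with $j<m$ stop short of $V_{2m+1}$; keeping this asymmetry straight while tracking powers through the oblique (pivoted) basis is the delicate part of the argument.
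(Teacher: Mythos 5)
Your proof is correct and takes essentially the same route as the paper's --- range inclusion of $A\mathbb{V}_{2m}$ in $\mathrm{range}(\mathbb{V}_{2m+1})$ via the recurrences, identification of the coefficient matrix by left-multiplying with $\mathbb{V}_{2m+1}^L$, and splitting off the last block row using the Hessenberg structure --- while in fact supplying the details the paper only asserts (the power-basis argument for the even columns $AV_{2j}$, the block-lower-triangular structure of $\mathbb{L}_{2m+1}$ forcing the first $2mp$ rows of $\mathbb{V}_{2m+1}^L$ to equal $\mathbb{V}_{2m}^L$, and the vanishing of $T_{2m+1,j}$ for $j\le 2m-2$). One small remark: your closing worry that $AV_{2m}$ must ``genuinely reach'' $V_{2m+1}$ is not actually needed, since $\tau_m$ is \emph{defined} as the pair $[T_{2m+1,2m-1},\,T_{2m+1,2m}]$ and the identity \eqref{decompositionAV} holds whether or not these blocks are nonzero.
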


\begin{proof}
According to the recursion formulas \eqref{blockvectorsHessenberg}, we get for $j=1,\ldots,m$
\begin{align}
    AV_{2j-1} \text{ and } AV_{2j}\in{\rm range}\{V_1,\ldots,V_{2j+1}\}.
\end{align}
Hence,
\[
A\mathbb{V}_{2m}\in{\rm range}\{V_1,\ldots,V_{2m+1}\}.
\]
Then there exists a matrix $T\in\mathbb{R}^{(2m+1)p\times 2mp}$ such that
\begin{align}\label{VTT}
A\mathbb{V}_{2m}=\mathbb{V}_{2m+1}T.
\end{align}

Multiplying the equation \eqref{VTT} by $\mathbb{V}_{2m+1}^L$ from the left gives
\[
\mathbb{V}_{2m+1}^LA\mathbb{V}_{2m}=T.
\]
It follows that $\widetilde{\mathbb{T}}_{2m+1}=T.$ Since $\widetilde{\mathbb{T}}_{2m+1}$ is an upper block Hessenberg matrix with $2p\times 2p$ blocks, then $\mathbb{V}_{2m+1}\widetilde{\mathbb{T}}_{2m+1}$ can be decomposed as follows
\[
\mathbb{V}_{2m+1}\widetilde{\mathbb{T}}_{2m+1}=A\mathbb{V}_{2m}=\mathbb{V}_{2m}\mathbb{T}_{2m}+V_{2m+1}\tau_m E^T_m.
\]
Which completes the proof.
\end{proof}

 \noindent As the extended block Arnoldi process \cite{HJ2}, the entries of $\mathbb{T}_{2m}$ and $\widetilde{\mathbb{T}}_{2m}$ can be expressed in terms of recursion coefficients for the extended block Hessenberg process as shown below. This makes them easy to compute.

\begin{proposition}\label{proposition:ComputeTm}
Let the matrices $H_{i,j}$ and $\Gamma_{i,j}$ as defined in \eqref{blockvectorsHessenberg}, \eqref{ComputeV1V2},\eqref{V2} and \eqref{gamma12}; respectively. The matrix $\mathbb{T}_{2m}=[T_{i,j}]$ in \eqref{TmDefinition} is block upper Hessenberg with $2p\times 2p$ blocks with the nontrivial entries,
\begin{align}\label{TT1}
T_{:,2j-1}=H_{:,2j-1},\quad j=1,\ldots,m,
\end{align}
\begin{align}\label{TT2}
T_{:,2}=\widehat{E}_1\Gamma_{11}\Gamma_{2,2}^{-1}-H_{:,1}\Gamma_{1,2}\Gamma_{22}^{-1},
\end{align}
For $j=1,\ldots,m-1$,
\begin{align}\label{TT3}
T_{:,2j+2}=\Eh_{2j}H^{-1}_{2j+2,2j}-\s_{i=1}^{2j+1}H_{:,i}H_{i,2j}H_{2j+2,2j}^{-1},
\end{align}
where $T_{:,i}=\mathbb{T}_{2m}\widehat{E}_i\in\R^{2mp\times p},$ $H_{:,i}=\mathbb{H}_{2m}\widehat{E}_i\in\R^{2mp\times p},$  and  $\Eh_i=[\widehat{e}_i\otimes I_p]\in\R^{2mp\times p}$, for $i=1,\ldots,2m,$ where $\otimes$ denotes the Kronecker product and $\widehat{e}_i$ is the $i$-th vector of the canonical basis of $\R^{2m}$.
\end{proposition}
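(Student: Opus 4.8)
The plan is to read off two closed recursions from the construction and then project them with the left inverse $\mathbb{V}_{2m}^L$. First I would combine the update formulas \eqref{blockvectorsHessenberg} with the two PLU steps that define $V_{2j+1}$ and $V_{2j+2}$: substituting $\widetilde{V}_{2j,2j+1}=V_{2j+1}H_{2j+1,2j-1}$ and $\widetilde{V}_{2j+1,2j+2}=V_{2j+2}H_{2j+2,2j}$ turns \eqref{blockvectorsHessenberg} into the closed relations
\begin{equation*}
AV_{2j-1}=\sum_{k=1}^{2j+1}V_kH_{k,2j-1},\qquad A^{-1}V_{2j}=\sum_{k=1}^{2j+2}V_kH_{k,2j},
\end{equation*}
which I will call the positive- and negative-power recursions. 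Alongside these I would record three elementary facts about the left inverse: from $\mathbb{V}_{2m}^L\mathbb{V}_{2m}=I_{2mp}$ one gets $\mathbb{V}_{2m}^LV_k=\widehat{E}_k$ for $1\le k\le 2m$; from the Galerkin condition \eqref{Galerkin}, which gives $\widetilde{E}_i^T\widetilde{V}_{2m,2m+1}=0$ for $i\le 2m$ and hence $\widetilde{E}_i^TV_{2m+1}=0$, it follows that $\mathbb{P}_{2m}^TV_{2m+1}=0$ and therefore $\mathbb{V}_{2m}^LV_{2m+1}=\mathbb{L}_{2m}^{-1}\mathbb{P}_{2m}^TV_{2m+1}=0$; and by definition $T_{:,k}=\mathbb{T}_{2m}\widehat{E}_k=\mathbb{V}_{2m}^LAV_k$.

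The odd columns \eqref{TT1} then fall out at once. Multiplying the positive-power recursion on the left by $\mathbb{V}_{2m}^L$ gives $T_{:,2j-1}=\sum_{k=1}^{2j+1}(\mathbb{V}_{2m}^LV_k)H_{k,2j-1}$; each $V_k$ with $k\le 2m$ contributes $\widehat{E}_k$, while the only possibly out-of-range term, $V_{2m+1}$ (which appears when $j=m$), is annihilated by the boundary fact above. Hence $T_{:,2j-1}=\sum_{k=1}^{2j}\widehat{E}_kH_{k,2j-1}=H_{:,2j-1}$.

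For the even columns I would convert the negative-power recursion into an expression for $A$ applied to an even-indexed basis block. Applying $A$ gives $V_{2j}=\sum_{k=1}^{2j+2}AV_kH_{k,2j}$; isolating the top term and using invertibility of the pivot block $H_{2j+2,2j}$ (guaranteed by the no-breakdown assumption) yields
\begin{equation*}
AV_{2j+2}=\Big(V_{2j}-\sum_{k=1}^{2j+1}AV_kH_{k,2j}\Big)H_{2j+2,2j}^{-1}.
\end{equation*}
Projecting with $\mathbb{V}_{2m}^L$ and using $\mathbb{V}_{2m}^LV_{2j}=\widehat{E}_{2j}$ together with $\mathbb{V}_{2m}^LAV_k=T_{:,k}$ produces the recursion \eqref{TT3}; by \eqref{TT1} one has $T_{:,k}=H_{:,k}$ for odd $k$, so the sum is expressed through already-computed columns. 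The base case \eqref{TT2} is the same computation started one step earlier: from \eqref{ComputeV1V2} and \eqref{V2} we have $V=V_1\Gamma_{1,1}$ and $A^{-1}V=V_1\Gamma_{1,2}+V_2\Gamma_{2,2}$, so applying $A$ and solving for $AV_2$ gives $AV_2=V_1\Gamma_{1,1}\Gamma_{2,2}^{-1}-AV_1\Gamma_{1,2}\Gamma_{2,2}^{-1}$, whence projection delivers $T_{:,2}=\widehat{E}_1\Gamma_{1,1}\Gamma_{2,2}^{-1}-H_{:,1}\Gamma_{1,2}\Gamma_{2,2}^{-1}$.

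The step I expect to be the real work is the even-column recursion. It is genuinely recursive in the columns of $\mathbb{T}_{2m}$ rather than of $\mathbb{H}_{2m}$, because the projected vectors $\mathbb{V}_{2m}^LAV_k$ for \emph{even} $k$ are not the raw Hessenberg columns; the correct bookkeeping is to substitute $T_{:,k}$ (which equals $H_{:,k}$ only for odd $k$). I would also verify that every index invoked for $j\le m-1$ stays within $\{1,\dots,2m\}$, so that the left-inverse identities apply with no boundary correction and the out-of-space block $V_{2m+1}$ never enters the even formulas. A secondary point to check is that the pivot blocks $\Gamma_{2,2}$ and $H_{2j+2,2j}$ are invertible, which is again exactly the generic no-breakdown hypothesis already in force.
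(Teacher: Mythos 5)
Your argument follows the same route as the paper's own proof: close the recursions \eqref{blockvectorsHessenberg} via the PLU steps, then hit everything with the left inverse $\mathbb{V}_{2m}^{L}$, using $\mathbb{V}_{2m}^{L}V_k=\widehat{E}_k$ and $\mathbb{V}_{2m}^{L}AV_k=T_{:,k}$. The substantive difference is the point you single out as ``the real work,'' and there you are right where the paper is not. The paper's proof of \eqref{TT3} concludes by invoking ``$H_{:,i}=\mathbb{V}_{2m}^{L}AV_i$'' for every $i=1,\ldots,2j+1$; this identity holds only for odd $i$, because for even $i$ the column $H_{:,i}$ stores the coefficients of $A^{-1}V_i$ in the basis, not those of $AV_i$ (compare \eqref{TT2}: $\mathbb{V}_{2m}^{L}AV_2=T_{:,2}$ is supported on the first $3p$ rows, whereas $H_{:,2}$ has the nonzero pivot block $H_{4,2}$ in rows $3p+1,\ldots,4p$). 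The projection therefore yields
\begin{equation*}
T_{:,2j+2}=\widehat{E}_{2j}H_{2j+2,2j}^{-1}-\sum_{i=1}^{2j+1}T_{:,i}\,H_{i,2j}\,H_{2j+2,2j}^{-1},
\end{equation*}
a recursion in the columns of $\mathbb{T}_{2m}$, exactly as you say; \eqref{TT3} as printed, with $H_{:,i}$ in the sum, is wrong whenever some $H_{i,2j}$ with even $i$ is nonzero, and your substitution of $T_{:,i}$ (which coincides with $H_{:,i}$ precisely for odd $i$, by \eqref{TT1}) is the needed repair---it is also the form of the recursion in the extended block Arnoldi analogue that the paper cites. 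Your treatment of \eqref{TT1} is likewise more careful than the paper's one-line justification: discarding the out-of-space term $V_{2m+1}H_{2m+1,2m-1}$, which occurs at $j=m$, really does require $\mathbb{P}_{2m}^{T}V_{2m+1}=0$, which you correctly derive from the Galerkin conditions \eqref{Galerkin}. One slip on your side: in the final display of your \eqref{TT1} argument the sum should run to $k=2j+1$ (to $k=2m$ when $j=m$), not to $k=2j$; your surrounding prose already describes the correct bookkeeping, so this is only a typo.
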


\begin{proof}
We have $T_{:,2j-1}=\mathbb{V}_{2m}^LAV_{2j-1}.$ Therefore, \eqref{TT1} follows from the expression of $H_{:,2j-1}$ in \eqref{HHH}. Using \eqref{V2}, we obtain
\[
A^{-1}V_1\Gamma_{1,1}=V_1\Gamma_{1,2}+V_2\Gamma_{2,2}.
\]
Multiplying this equation by $A$ from the left yields 
\[
V_1\Gamma_{1,1}=AV_1\Gamma_{1,2}+AV_2\Gamma_{2,2}.
\]
Then the vector $AV_2$ is written as follows
\[
AV_2=V_1\Gamma_{1,1}\Gamma^{-1}_{2,2}-AV_1\Gamma_{1,2}\Gamma^{-1}_{2,2}.
\]
The relation \eqref{TT2} is obtained by multiplying the expression $AV_2$ by $\mathbb{V}_{2m}^L$ from the left, i.e.,
\begin{align*}
    T_{:,2}&=\mathbb{V}_{2m}^LAV_2=\mathbb{V}_{2m}^LV_1\Gamma_{1,1}\Gamma_{2,2}^{-1}-\mathbb{V}_{2m}^LAV_1\Gamma_{1,2}\Gamma_{2,2}^{-1}\\
    &=\widehat{E}_1\Gamma_{1,1}\Gamma_{2,2}^{-1}-H_{:,1}\Gamma_{1,2}\Gamma_{2,2}^{-1}.
\end{align*}

\noindent The formula \eqref{TT3} is obtained from the expression of $AV_{2j+2},$ for $j=1,\ldots,m-1.$ Thus, multiplying the second equality in  \eqref{blockvectorsHessenberg} by $A$ from the left gives 
\[
AV_{2j+2}H_{2j+2,2j}=V_{2j}-\sum\limits_{i=1}^{2j+1}AV_iH_{i,2j}.
\]
Then,
\[
AV_{2j+2}=V_{2j}H_{2j+2,2j}^{-1}-\sum\limits_{i=1}^{2j+1}AV_iH_{i,2j}H_{2j+2,2j}^{-1}.
\]
The expression \eqref{TT3} is easily obtained  by multiplying the expression $AV_{2j+2}$ by $\mathbb{V}_{2m}^L$  from the left and using the fact that $\mathbb{V}_{2m}^LV_{2j}=\widehat{E}_{2j}$ and $H_{:,i}=\mathbb{V}_{2m}^LAV_i$. This concludes the proof of the proposition.
\end{proof}

Next, we show some auxiliary results on properties of the projection matrix $\mathbb{T}_{2m}$ defined in \eqref{TmDefinition} and the inverse projection matrix
\begin{align}\label{SmDefinition}
\mathbb{S}_{2m}=[S_{i,j}]=\mathbb{V}_{2m}^L A^{-1}\mathbb{V}_{2m}\in\R^{2mp\times 2mp},
\end{align}
with $S_{i,j}=\Vm_{2m}^L(p(i-1)+1:ip,:)A^{-1}\Vm_{2m}(:,p(j-1)+1:jp)$, $i,j=1,2,\ldots,m$. The matrix $\mathbb{S}_{2m}$ satisfies the following decomposition
\begin{align}\label{decompositionAinvV}
A^{-1}\mathbb{V}_{2m}=\mathbb{V}_{2m}\mathbb{S}_{2m}+[V_{2m+1},V_{2m+2}]\begin{bmatrix}0 & S_{2m+1,2m}\\
0&  S_{2m+2,2m}\end{bmatrix}E^T_m,
\end{align}

The following result relates positive powers of $\mathbb{S}_{2m}$ to negative powers of $\mathbb{T}_{2m}$.

\begin{lemma}\label{lemma:positivenegativePowersST}
Let $\mathbb{T}_{2m}$ and $\mathbb{S}_{2m}$ be given by \eqref{TmDefinition} and \eqref{SmDefinition}; respectively, and let $E_1=[e_{1},\ldots,e_{p}]\in\R^{2mp\times p}$. Then
\begin{align}\label{positivenegativePowersST}
    \mathbb{S}_{2m}^j
E_1=\mathbb{T}_{2m}^{-j}E_1,\quad \rm{for} \quad j=1,\ldots,m.
\end{align}
\end{lemma}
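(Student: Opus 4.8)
The plan is to exploit the single structural feature that distinguishes the extended block Krylov subspace from an ordinary one: it contains the negative powers $A^{-1}V,\ldots,A^{-m}V$. Since $V_1=V\Gamma_{1,1}^{-1}$ by \eqref{ComputeV1V2}, definition \eqref{Kme} shows that $A^{-j}V_1$ lies in $\mathbb{K}^e_m(A,V)=\mathrm{range}(\mathbb{V}_{2m})$ for every $j=0,1,\ldots,m$ (the boundary case $j=m$ is exactly the presence of $A^{-m}V$ in the subspace). This range membership is the only input the proof needs, and it is precisely why the stated identity is confined to $j\le m$.

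First I would record the exactness of the oblique projector $\mathbb{V}_{2m}\mathbb{V}_{2m}^L$. Because $\mathbb{V}_{2m}^L\mathbb{V}_{2m}=I_{2mp}$, any block $y$ whose columns lie in $\mathrm{range}(\mathbb{V}_{2m})$ satisfies $\mathbb{V}_{2m}\mathbb{V}_{2m}^Ly=y$. Inserting this identity right after the leading $A$ or $A^{-1}$ in the definitions \eqref{TmDefinition} and \eqref{SmDefinition} yields the two intertwining relations I will use repeatedly: for every such $y$,
\[
\mathbb{T}_{2m}\,\mathbb{V}_{2m}^Ly=\mathbb{V}_{2m}^LA\mathbb{V}_{2m}\mathbb{V}_{2m}^Ly=\mathbb{V}_{2m}^LAy,\qquad
\mathbb{S}_{2m}\,\mathbb{V}_{2m}^Ly=\mathbb{V}_{2m}^LA^{-1}y.
\]
These say that, on vectors already contained in the subspace, compressing after applying $A^{\pm1}$ agrees with applying the compressed operator.

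Next I would introduce the blocks $c_j:=\mathbb{V}_{2m}^LA^{-j}V_1$, with $c_0=\mathbb{V}_{2m}^LV_1=E_1$. Taking $y=A^{-j}V_1$ in the second relation—legitimate exactly for $j\le m$, where $y$ lies in the subspace—gives $\mathbb{S}_{2m}c_j=\mathbb{V}_{2m}^LA^{-(j+1)}V_1=c_{j+1}$, so by induction $\mathbb{S}_{2m}^jE_1=c_j$ for $j=0,\ldots,m$. Taking the same $y$ in the first relation gives $\mathbb{T}_{2m}c_j=\mathbb{V}_{2m}^LA^{-(j-1)}V_1=c_{j-1}$ for $j=1,\ldots,m$. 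Combining the two yields the one-step identity $\mathbb{T}_{2m}\,\mathbb{S}_{2m}^jE_1=\mathbb{S}_{2m}^{j-1}E_1$, and telescoping it downward,
\[
\mathbb{T}_{2m}^{\,j}\mathbb{S}_{2m}^jE_1=\mathbb{T}_{2m}^{\,j-1}\mathbb{S}_{2m}^{j-1}E_1=\cdots=\mathbb{S}_{2m}^0E_1=E_1 ,
\]
where every intermediate exponent stays within $\{0,\ldots,m\}$ so that each step is justified. Left-multiplying by $\mathbb{T}_{2m}^{-j}$ then gives $\mathbb{S}_{2m}^jE_1=\mathbb{T}_{2m}^{-j}E_1$.

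The point to watch is the boundary index rather than any hard computation: the exactness $\mathbb{V}_{2m}\mathbb{V}_{2m}^LA^{-j}V_1=A^{-j}V_1$ is guaranteed only up to $j=m$, which is where $A^{-m}V\in\mathbb{K}^e_m$ enters and which pins the conclusion to $j=1,\ldots,m$ and no further. The only auxiliary fact invoked is the invertibility of $\mathbb{T}_{2m}$ in the last step; this is consistent with the generic no-breakdown assumption already in force (indeed $\mathbb{T}_{2m}\mathbb{S}_{2m}E_1=E_1$ already forces $\mathbb{T}_{2m}$ to act invertibly on the relevant blocks). Notably, this route uses only the definitions of $\mathbb{T}_{2m}$ and $\mathbb{S}_{2m}$ together with the range property, so the Arnoldi-type decompositions \eqref{decompositionAV} and \eqref{decompositionAinvV} are not actually needed here.
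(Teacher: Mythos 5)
Your proof is correct, and it takes a genuinely different route from the paper's. The paper argues at the level of the compressed matrices: from the decompositions \eqref{decompositionAV} and \eqref{decompositionAinvV} it derives $I_{2mp}=\mathbb{T}_{2m}\mathbb{S}_{2m}+CE_m^T$ for an explicit correction matrix $C$ built from $\mathbb{V}_{2m}^LA[V_{2m+1},V_{2m+2}]$ and the blocks $S_{2m+1,2m},S_{2m+2,2m}$, telescopes this to $\mathbb{T}_{2m}^{j}\mathbb{S}_{2m}^{j}E_1=\mathbb{T}_{2m}^{j-1}\mathbb{S}_{2m}^{j-1}E_1-\mathbb{T}_{2m}^{j-1}CE_m^T\mathbb{S}_{2m}^{j-1}E_1$, and annihilates the second term using the sparsity fact $E_m^T\mathbb{S}_{2m}^{j-1}E_1=0$ for $j=1,\ldots,m$, read off from the block Hessenberg structure of $\mathbb{S}_{2m}$. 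You reach the same pivotal identity $\mathbb{T}_{2m}^{j}\mathbb{S}_{2m}^{j}E_1=E_1$ by a subspace argument instead: the membership $A^{-j}V_1\in\mathbb{K}^e_m(A,V)=\mathrm{range}(\mathbb{V}_{2m})$ for $j\le m$, combined with exactness of the oblique projector $\mathbb{V}_{2m}\mathbb{V}_{2m}^L$ on that range, yields the recurrences $\mathbb{S}_{2m}^{j}E_1=\mathbb{V}_{2m}^LA^{-j}V_1$ and $\mathbb{T}_{2m}\mathbb{S}_{2m}^{j}E_1=\mathbb{S}_{2m}^{j-1}E_1$. Your route buys three things: it needs neither decomposition; the boundary $j\le m$ is transparent (it is literally the presence of $A^{-m}V$ in the subspace), whereas the paper's sparsity claim for the exponent $m-1$ is actually delicate --- the coarse $2p\times 2p$ Hessenberg pattern alone only guarantees $E_m^T\mathbb{S}_{2m}^{k}E_1=0$ for $k\le m-2$, and one must invoke the finer $p\times p$ sparsity of the odd-indexed block columns of $\mathbb{S}_{2m}$, which your membership argument encodes automatically; and it delivers relation \eqref{AinvjV1S} of Lemma \ref{lemma:exactitude} as a by-product. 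What the paper's route buys is that it stays entirely within matrix identities already established, never invoking that $\mathbb{V}_{2m}$ spans $\mathbb{K}^e_m(A,V)$, which is your one extra input (covered by the standing no-breakdown assumption). Both arguments require $\mathbb{T}_{2m}$ to be nonsingular, which is implicit in the statement of the lemma.
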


\begin{proof}
Using induction, we begin by showing that
\[
\Tm^j\Sm^j=\Tm^{j-1}\Sm^{j-1}-\Tm^{j-1}(\Vm_{2m}^L)A[V_{2m+1},V_{2m+2}])\widetilde{S}_{m+1,m}E^T_m\Sm^{j-1},\quad j=1,\ldots,m,
\]
with
\[
\widetilde{S}_{m+1,m}=\begin{bmatrix}
0 & S_{2m+1,2m}\\
0 & S_{2m+2,2m}
\end{bmatrix},
\]
Using the decomposition \eqref{decompositionAV} and \eqref{decompositionAinvV}, we obtain
\[
I_{2mp}=\Tm\Sm+(\Vm_{2m}^LA[V_{2m+1},V_{2m+2}])\widetilde{S}_{m+1,m}E^T_m,
\]
Let $j=2,3,\ldots,m$ and assume that \[
\Tm^k\Sm^k=\Tm^{k-1}\Sm^{k-1}-\Tm^{k-1}(\Vm_{2m}^L)A[V_{2m+1},V_{2m+2}])\widetilde{S}_{m+1,m}E^T_m\Sm^{k-1},\quad k=1,\ldots,j-1,
\]
by induction, we have
\begin{align*}
 \Tm^j\Sm^j&=\Tm\Tm^{j-1}\Sm^{j-1}\Sm\\
 &=\Tm^{j-1}\Sm^{j-1}-\Tm^{j-1}(\Vm_{2m}^LA[V_{2m+1},V_{2m+2}])\widetilde{S}_{m+1,m}E^T_m\Sm^{j-1},
\end{align*}
multiplying this equation by $E_1$ from the right gives
\[
\Tm^j\Sm^jE_1=\Tm^{j-1}\Sm^{j-1}E_1-\Tm^{j-1}(\Vm_{2m}^LA[V_{2m+1},V_{2m+2}])\widetilde{S}_{m+1,m}E^T_m\Sm^{j-1}E_1,
\]
exploiting the structure of the matrix $\Sm$, we have $E^T_m\Sm^{j-1}E_1=0,$ for $j=0,1,\ldots,m-1$. Then
\[
\Tm^j\Sm^jE_1=\Tm^{j-1}\Sm^{j-1}E_1=E_1,\quad j=1,\ldots,m.
\]
This completes the proof.

\end{proof}

\section{Application to the approximation of matrix functions }

In this section, we present the approximations of $\mathbb{I}(f)$ in \eqref{If} using the EBHA method. As in \cite{DK,KS,Simoncini,Simoncini2}, the approximation of $f(A)V$ is given by
\begin{align}\label{Ifapp}
    \mathbb{I}_{2m}(f):=\Vm_{2m}f(\mathbb{T}_{2m})E_1\Gamma_{1,1}.
\end{align}

The $n\times 2mp$ matrix $\Vm_{2m}=[V_1,\ldots,V_{2m}]$ is the matrix corresponding to the trapezoidal basis for $\mathbb{K}^e_m(A,V)$ constructed by applying $m$ steps of Algorithm \ref{algorithm:EBHA} to the pair $(A,V)$. $\mathbb{T}_{2m}$ is the projected matrix defined by \eqref{TmDefinition}, $E_1\in\R^{2mp\times p}$ is the first $p$ columns of the identity matrix $I_{2mp}$ and $\Gamma_{1,1}$ is a $p\times p$ matrix given by \eqref{ComputeV1V2}.

\begin{lemma}\label{lemma:exactitude}
Let $\mathbb{T}_{2m}$ and $\mathbb{S}_{2m}$ be defined by \eqref{TmDefinition} and \eqref{SmDefinition}; respectively, and let $\Vm_{2m}=[V_1,V_2,\ldots,V_{2m}]$ be the matrix computed in \eqref{ComputeV1V2} and \eqref{blockvectorsHessenberg}. Then \begin{eqnarray}
    A^jV_1&=\Vm_{2m}\mathbb{T}_{2m}^jE_1\quad &j=0,1,\ldots,m-1,\label{AjV1T}\\
    A^{-j}V_1&=\Vm_{2m}\mathbb{S}_{2m}^jE_1\quad &j=0,1,\ldots,m,\label{AinvjV1S}\\
    A^{-j}V_1&=\Vm_{2m}\mathbb{T}_{2m}^{-j}E_1\quad &j=0,1,\ldots,m.\label{AinvjV1T}
\end{eqnarray}
\end{lemma}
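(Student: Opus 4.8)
The plan is to prove the three identities in sequence: I would treat \eqref{AjV1T} and \eqref{AinvjV1S} by induction on $j$, and then obtain \eqref{AinvjV1T} as an immediate corollary of \eqref{AinvjV1S}. First, note that all three share the same base case $j=0$. Since $E_1$ consists of the first $p$ columns of $I_{2mp}$, we have $\Vm_{2m}E_1 = V_1$, while $\Tm^0 E_1 = \Sm^0 E_1 = E_1$; hence each identity collapses to $V_1 = V_1$ at $j=0$.

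For the positive powers \eqref{AjV1T}, I would induct on $j$ using the decomposition \eqref{decompositionAV}. Assuming $A^j V_1 = \Vm_{2m}\Tm^j E_1$, I apply $A$ on the left and substitute $A\Vm_{2m} = \Vm_{2m}\Tm + V_{2m+1}\tau_m E_m^T$ to obtain $A^{j+1}V_1 = \Vm_{2m}\Tm^{j+1}E_1 + V_{2m+1}\tau_m E_m^T\Tm^j E_1$. The inductive step closes once the correction term vanishes, i.e.\ once $E_m^T\Tm^j E_1 = 0$. This is exactly where the block structure enters: $\Tm$ is block upper Hessenberg with $2p\times 2p$ blocks, so one multiplication by $\Tm$ can enlarge the block support of a vector by at most one $2p$-block downward. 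Since $E_1$ is supported in the first $2p$-block, $\Tm^j E_1$ is supported in the first $j+1$ blocks, and this misses the last block selected by $E_m^T$ precisely when $j \le m-2$. Consequently the induction runs up to $j+1 = m-1$, yielding \eqref{AjV1T} for $j = 0,1,\ldots,m-1$.

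The argument for the negative powers \eqref{AinvjV1S} is identical in form, now using $A^{-1}$ and the decomposition \eqref{decompositionAinvV}. Assuming $A^{-j}V_1 = \Vm_{2m}\Sm^j E_1$ and multiplying by $A^{-1}$, substitution of \eqref{decompositionAinvV} gives $A^{-(j+1)}V_1 = \Vm_{2m}\Sm^{j+1}E_1 + [V_{2m+1},V_{2m+2}]\widetilde{S}_{m+1,m}E_m^T\Sm^j E_1$, so the claim again reduces to the vanishing condition $E_m^T\Sm^j E_1 = 0$. This fact was already isolated from the structure of $\Sm$ in the proof of Lemma \ref{lemma:positivenegativePowersST}, where it holds for $j = 0,1,\ldots,m-1$; I would simply invoke it, which lets the induction reach $j+1 = m$ and establishes \eqref{AinvjV1S} for $j=0,1,\ldots,m$. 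Finally, \eqref{AinvjV1T} is immediate: combining the identity just proved with the relation $\Sm^j E_1 = \Tm^{-j}E_1$ of Lemma \ref{lemma:positivenegativePowersST} gives $A^{-j}V_1 = \Vm_{2m}\Sm^j E_1 = \Vm_{2m}\Tm^{-j}E_1$ for $j = 0,1,\ldots,m$.

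The main (and essentially only) obstacle is justifying the two sparsity facts $E_m^T\Tm^j E_1 = 0$ and $E_m^T\Sm^j E_1 = 0$ on the correct ranges of $j$; every other step is routine substitution of the two decompositions. For $\Tm$ this is a clean consequence of the block upper Hessenberg pattern together with a careful index count, and the reason the positive-power range stops at $m-1$ rather than $m$ is exactly the ``spread by one block per multiplication'' phenomenon. For $\Sm$ I would lean on the complementary structure already exploited in Lemma \ref{lemma:positivenegativePowersST}, so no new structural work is required there.
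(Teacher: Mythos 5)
Your proposal is correct and follows essentially the same route as the paper: induction on $j$ using the decompositions \eqref{decompositionAV} and \eqref{decompositionAinvV}, with the inductive step closed by the structural facts $E_m^T\mathbb{T}_{2m}^{j}E_1=0$ (for $j\le m-2$) and $E_m^T\mathbb{S}_{2m}^{j}E_1=0$ (for $j\le m-1$), and with \eqref{AinvjV1T} deduced from \eqref{AinvjV1S} via Lemma~\ref{lemma:positivenegativePowersST}. Your explicit block-support counting for why the positive-power range stops at $m-1$ while the negative-power range reaches $m$ is in fact more detailed than the paper's terse ``exploiting the structure'' remark, but it is the same argument.
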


\begin{proof}
We obtain from \eqref{decompositionAV} that
\[
A\mathbb{V}_{2m}=\mathbb{V}_{2m}\Tm+V_{2m+1}
\tau_m E^T_m.
\]
Multiplying this equation by $E_1$ from the right gives
\[
AV_{1}=A\mathbb{V}_{2m}E_{1}=\mathbb{V}_{2m}\Tm E_1
+V_{2m+1}\tau_m E_m^T E_{1}=\mathbb{V}_{2m}\Tm E_1.
\]
Let $j=2,3,\ldots,m-1$, and assume that
\[
A^{k}V_{1}=\mathbb{V}_{2m}\Tm^{k}E_{1},\quad k=0,1,\ldots,j-1.
\]
We will show the identity
\[
A^{j}V_{1}=\mathbb{V}_{2m}\Tm^{j}E_{1},
\]
by induction. We have
\[
A^{j}V_{1}=A\cdot A^{j-1}V_{1}=A\mathbb{V}_{2m}\Tm^{j-1}E_{1}.
\]
Using the decomposition \eqref{decompositionAV}, we obtain
\begin{align*}
A^{j}V_{1}&=\begin{bmatrix}\mathbb{V}_{2m}\Tm+V_{2m+1}
\begin{pmatrix}\tau_m
E^{T}_m\end{pmatrix}\end{bmatrix}\bigg(T^{j-1}_{2m}E_{1}\bigg)\\
&=\mathbb{V}_{2m}\Tm^{j}E_{1}+V_{2m+1}
\begin{pmatrix}\tau_mE^T_{m}
T^{j-1}_{2m}E_{1}\end{pmatrix}.
\end{align*}
Exploiting the structure of the matrix $\Tm$, we have $E^T_m\Tm^{j}E_1=0$, for $j=0,1,\ldots,m-2$. Then
\[
A^{j}V_{1}=\mathbb{V}_{2m}\Tm^{j}E_{1}.
\]
According to \eqref{decompositionAinvV} and by using the same techniques as
above, we find \eqref{AinvjV1S}. Finally, \eqref{AinvjV1T} follows from \eqref{AinvjV1S} and Lemma \ref{lemma:positivenegativePowersST}.
\end{proof}

According to results of this Lemma, we observe that the approximation \eqref{Ifapp} is exact for Laurent polynomials of positive degree at most $m-1$, and negative degree at most $m$.

\subsection{ Approximations for \boldmath{$\exp(A)V$}}

In this subsection, we consider, the approximation of $\exp(A)V$, which is given by   $\Vm_{2m}\exp(\mathbb{T}_{2m})E_1\Gamma_{1,1}$. In the following proposition, we give an upper error bound associated to this approximation. This result holds when the matrix $A$ satisfies the following assumption $x^TAx\leq 0$, $\forall x\in\R^n$.

\begin{proposition}
Let $\Vm_{2m}$ be the matrix computed in \eqref{ComputeV1V2} and \eqref{blockvectorsHessenberg} and let $\mathbb{T}_{2m}$ be the projected matrix defined by \eqref{TmDefinition}. Assume that the matrix $A$ satisfies $x^TAx\leq 0$ $\forall x\in\R^n$.  Then the approximation error satisfies
\begin{align}
    \|\mathbb{I}(\exp)-\mathbb{I}_{2m}(\exp)\|_2\leq C_{2m}\dfrac{1-e^{\mu_2(A)}}{-\mu_2(A)},
\end{align}
where $C_{2m}:=\|V_{2m+1}\tau_mE^T_m\exp(\mathbb{T}_{2m})E_1\Gamma_{1,1}\|_2$,  $\mu_2(A):=\frac{1}{2}\lambda_{max}(A+A^T)\leq0,$ and  $\|\cdot\|_2$ corresponds to the spectral norm.
\end{proposition}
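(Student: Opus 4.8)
The plan is to represent the approximation error as an integral that exposes the single ``leakage'' term $V_{2m+1}\tau_m E_m^T$, and then bound that integral using the dissipativity hypothesis $x^TAx\le 0$. First I would invoke the standard integral representation of the matrix exponential applied to the starting block, namely $\exp(A)V=\exp(A)V_1\Gamma_{1,1}$ (using \eqref{ComputeV1V2}), together with the companion representation $\exp(\mathbb{T}_{2m})E_1\Gamma_{1,1}$ for the projected quantity. The cleanest route is to introduce the residual function $R(t):=\exp(tA)V_1-\Vm_{2m}\exp(t\mathbb{T}_{2m})E_1$ and differentiate it in $t$: using $\tfrac{d}{dt}\exp(tA)V_1=A\exp(tA)V_1$ and the decomposition \eqref{decompositionAV}, i.e. $A\Vm_{2m}=\Vm_{2m}\mathbb{T}_{2m}+V_{2m+1}\tau_m E_m^T$, one finds that $R(t)$ satisfies an inhomogeneous linear ODE whose forcing term is precisely $-V_{2m+1}\tau_m E_m^T\exp(t\mathbb{T}_{2m})E_1$, with $R(0)=0$. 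By the variation-of-constants (Duhamel) formula this yields
\begin{align}\label{duhamel}
\mathbb{I}(\exp)-\mathbb{I}_{2m}(\exp)=-\Big(\int_0^1 \exp((1-t)A)\,V_{2m+1}\tau_m E_m^T\exp(t\mathbb{T}_{2m})E_1\,dt\Big)\Gamma_{1,1}.
\end{align}

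Next I would take spectral norms inside the integral and factor out the constant block. The term $\|V_{2m+1}\tau_m E_m^T\exp(t\mathbb{T}_{2m})E_1\Gamma_{1,1}\|_2$ should be controlled by $C_{2m}$ from the statement; the mild technical point is that $C_{2m}$ is defined with $t=1$, so I would either absorb the $t$-dependence by bounding $\|\exp(t\mathbb{T}_{2m})E_1\|$ uniformly or, more transparently, keep the bound in the form that reproduces $C_{2m}$ after using monotonicity of the relevant norms. The remaining factor $\|\exp((1-t)A)\|_2$ is where the hypothesis enters: for a matrix with $x^TAx\le 0$ the logarithmic norm satisfies $\mu_2(A)=\tfrac12\lambda_{\max}(A+A^T)\le 0$, and the standard logarithmic-norm estimate gives $\|\exp(sA)\|_2\le e^{s\,\mu_2(A)}$ for $s\ge 0$. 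Substituting $s=1-t$ produces
\begin{align}\label{expbound}
\|\mathbb{I}(\exp)-\mathbb{I}_{2m}(\exp)\|_2\le C_{2m}\int_0^1 e^{(1-t)\mu_2(A)}\,dt
= C_{2m}\,\frac{1-e^{\mu_2(A)}}{-\mu_2(A)},
\end{align}
which is exactly the claimed bound; the final equality is the elementary evaluation of the integral, valid since $\mu_2(A)<0$ (and the limit $\mu_2(A)\to 0$ recovers the value $1$).

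I expect the main obstacle to be the bookkeeping around $C_{2m}$ rather than any deep difficulty: one must cleanly separate the fixed leakage block $V_{2m+1}\tau_m E_m^T\exp(\mathbb{T}_{2m})E_1\Gamma_{1,1}$ from the time-dependent propagators so that the constant pulled out of the integral matches the definition of $C_{2m}$ at $t=1$. The key ingredients—the Duhamel identity \eqref{duhamel}, which rests entirely on the Arnoldi-type decomposition \eqref{decompositionAV}, and the logarithmic-norm inequality $\|\exp(sA)\|_2\le e^{s\mu_2(A)}$ furnished by the dissipativity assumption—are both standard, so once the residual ODE is set up correctly the estimate \eqref{expbound} follows by direct integration.
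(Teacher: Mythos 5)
Your proposal is correct and follows essentially the same route as the paper's own proof: both set up the error function $E_{2m}(t)=\exp(tA)V-\Vm_{2m}\exp(t\mathbb{T}_{2m})E_1\Gamma_{1,1}$, use the decomposition \eqref{decompositionAV} to obtain the inhomogeneous ODE with forcing $V_{2m+1}\tau_m E_m^T\exp(t\mathbb{T}_{2m})E_1\Gamma_{1,1}$ (your sign on the forcing is flipped, but so is your Duhamel formula, so the bound is unaffected after taking norms), solve by variation of constants, and bound the propagator via the logarithmic-norm estimate $\|\exp(sA)\|_2\le e^{s\mu_2(A)}$. The technical point you flag about $C_{2m}$ being defined at $t=1$ is genuine: the paper bounds the integrand by $\max_{\lambda\in[0,1]}\|V_{2m+1}\tau_m E_m^T\exp(\lambda\mathbb{T}_{2m})E_1\Gamma_{1,1}\|_2$ and then silently identifies that maximum with $C_{2m}$, so your explicit caution is, if anything, more careful than the original.
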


\begin{proof}
Consider $X(t)=\exp(tA)V$ and $X_{2m}(t)=\Vm_{2m}\exp(t\mathbb{T}_{2m})E_1\Gamma_{1,1}$ with $t>0$. Then it follows that the derivative of $X(t)$ can be written as
\[
X^{'}(t)=A\exp(tA)V=AX(t),\quad t>0.
\]
Using \eqref{decompositionAV}, we obtain
\[
X^{'}_{2m}(t)=AX_{2m}-V_{2m+1}\tau_m E^T_m\exp(t\mathbb{T}_{2m})E_1\Gamma_{1,1}.
\]
Define the approximate error as  $E_{2m}(t):=X(t)-X_{2m}(t)$, then the derivative of $E_{2m}(t)$ is computed as
\begin{align*}
    E^{'}_{2m}(t)&=X^{'}(t)-X^{'}_{2m}(t)\\
    &=AX(t)-AX_{2m}(t)+V_{2m+1}\tau_m E^T_m\exp(t\mathbb{T}_{2m})E_1\Gamma_{1,1}.
  \end{align*}
Then the approximate error $E_{2m}$ satisfies the following equation
\begin{equation}\label{Eqq}
    \begin{cases}
    E^{'}_{2m}(t)&=AE_{2m}(t)+V_{2m+1}\tau_m E^T_m\exp(t\mathbb{T}_{2m})E_1\Gamma_{1,1},\quad t>0\\
    E_{2m}(0)&=0.
    \end{cases}
\end{equation}
This equation is a particular case of the general differential Sylvester equation (see. e.g., \cite{AFIJ,HaJ} for more details). Then the error $E_{2m}(t)$ is written as
\[
E_{2m}(t)=\int^{t}_0 \exp((t-s)A)V_{2m+1}\tau_m E^T_m\exp(s\mathbb{T}_{2m})E_1\Gamma_{1,1}ds,
\]
and for $t=1,$ we have
\[
E_{2m}(1)=\int^{1}_0 \exp((1-s)A)V_{2m+1}\tau_m E^T_m\exp(s\mathbb{T}_{2m})E_1\Gamma_{1,1}ds,
\]
\begin{align*}
\|\mathbb{I}(\exp)-\mathbb{I}_{2m}(\exp)\|_2=\|E_{2m}(1)\|_2&\leq \int^1_0 \|\exp((1-s)A)V_{2m+1}\tau_m E^T_m\exp(s\mathbb{T}_{2m})E_1\Gamma_{1,1}\|_2 ds\\
&\leq \int^1_0 \|\exp((1-s)A)\|_2 \|V_{2m+1}\tau_m E^T_m\exp(s\mathbb{T}_{2m})E_1\Gamma_{1,1}\|_2 ds\\
&\leq \max\limits_{\lambda\in[0,1]}\|V_{2m+1}\tau_m E^T_m\exp(\lambda\mathbb{T}_{2m})E_1\Gamma_{1,1}\|_2\int^1_0 \|\exp((1-s)A)\|_2 ds.
\end{align*}
Since $x^TAx\leq 0$ $\forall x\in\R^n$, then the use of the logarithmic norm yields (see. e.g., \cite[Section I.2.3]{HV}),  
\[
\|\exp(tA)\|_2\leq e^{t\mu_2(A)}, \quad \forall t>0.
\]
 Hence,
\[
\|\exp((1-s)A)\|_2\leq e^{(1-s)\mu_2(A)},\quad \text{for all } 0<s<1.
\]

\begin{align*}
\|\mathbb{I}(\exp)-\mathbb{I}_{2m}(\exp)\|_2&\leq \max\limits_{\lambda\in[0,1]}\|V_{2m+1}\tau_m E^T_m\exp(\lambda\mathbb{T}_{2m})E_1\Gamma_{1,1}\|_2\int^1_0 e^{(1-s)\mu_2(A)} ds\\
&\leq C_{2m} \dfrac{1-e^{\mu_2(A)}}{-\mu_2(A)}.
\end{align*}

\end{proof}

\noindent Algorithm \ref{algorithm:MFEBH} describes how approximations of $f(A)V$ are computed by the extended block Hessenberg method.

\begin{algorithm}
 \caption{Approximation of $f(A)V$ by the extended block Hessenberg method (MF-EBH)}\label{algorithm:MFEBH}
 \textbf{Inputs:} Matrix $A$, initial block $V\in\R^{n\times p}$, an integer $m$ and a function $f$.
 \begin{enumerate}
    \item $[\Vm_2,\Gamma]=\lu([V,A^{-1}V])$; where $\Vm_2=[V_1,V_2]$,
    \item $[\sim,p_1]=\max{(V_1)}$; and $[\sim,p_2]=\max{(V_2)}$;
    \item For $j=1:m$
    \begin{enumerate}
        \item $\V=AV_{2j-1}$;
        \item For $i=1:2j$\\
        $H_{i,2j-1}=(V_i(p_i,:))^{-1}\V(p_i,:)$;\\
        $\V=\V-V_iH_{i,2j-1}$;\\
        EndFor
        \item $[V_{2j+1},H_{2j+1,2j-1}]=\lu(\V)$;
        \item $[\sim,p_{2j+1}]=\max{(V_{2j+1})}$;
        \item $\V=A^{-1}V_{2j}$;
        \item For $i=1:2j+1$\\
        $H_{i,2j}=(V_i(p_i,:))^{-1}\V(p_i,:)$;\\
        $\V=\V-V_iH_{i,2j}$;\\
        EndFor
        \item $[V_{2j+2},H_{2j+2,2j}]=\lu(\V)$;
        \item $[\sim,p_{2j+2}]=\max{(V_{2j+2})}$; $\mathbb{V}_{2m+2}=[\mathbb{V}_{2m},V_{2m+1},V_{2m+2}].$
        \item EndFor
    \end{enumerate}
    \item Compute $\mathbb{T}_{2m}$ by using Proposition \ref{proposition:ComputeTm}.
    \item Set $\Gamma_{1,1}\in\R^{p\times p}$ be the first $p$ columns and $p$ rows of the matrix $\Gamma$.
    \item $\mathbb{I}_{app}(f)=\mathbb{V}_{2m}f(\mathbb{T}_{2m})E_1\Gamma_{1,1}$.
    \end{enumerate}
 \textbf{Output:} Approximation $\mathbb{I}_{app}(f)$ of the matrix function $f(A)V$.
 \end{algorithm}

\section{Shifted block linear systems}

We consider the solution of the parameterized nonsingular linear systems with multiple right hand sides
\begin{equation}\label{SL}
    (A+\sigma I_n)X^{\sigma}=C,\quad \sigma\in\Sigma.
\end{equation}
 $\Sigma$ is the set of the shifts. Then the approximate solutions $X_{2m}^{\sigma}\in\mathbb{R}^{n\times p}$ generated by the  extended block Hessenberg method to the pair $(A,R_0^{\sigma})$ are obtained as follows
\begin{equation}\label{XZ}
    X_{2m}^{\sigma}:=X_0^{\sigma}+Z_{2m}^{\sigma};\quad Z_{2m}^{\sigma}\in \mathbb{K}^{e}_{m}(A,R_0^{\sigma}),
\end{equation}
where $R^{\sigma}_0:=C-(A-\sigma I_n)X_0^{\sigma}$ are the residual block vectors associated to the initial guess $X_0^{\sigma}$. Since $Z_{2m}(\sigma)\in \mathbb{K}^e_m(A,R_0^{\sigma}),$ then,
\begin{equation}\label{ZZ}
Z_{2m}^{\sigma}=\mathbb{V}_{2m}Y_{2m}^{\sigma},\quad Y_{2m}^{\sigma}\in\mathbb{R}^{2mp}.
\end{equation}
$Y_{2m}^{\sigma}$ is determined such that the new residual $R_{2m}^{\sigma}=C-(A+\sigma I_n)X_{2m}^{\sigma}$ associated  to $X_{2m}^{\sigma}$ is orthogonal to $\mathbb{K}^{e}_{m}(A,R_0)$. This yields
\begin{equation}\label{OC}
     \mathbb{V}_{2m}^L R_{2m}^{\sigma}=0,
\end{equation}
where $\mathbb{V}^L_{2m}$ is the left inverse of the matrix $\mathbb{V}_{2m}$ defined by \eqref{VL2m}. According to \eqref{decompositionAV} equation, we obtain
\[
(A+\sigma I_n)\mathbb{V}_{2m}=\mathbb{V}_{2m}(\mathbb{T}_{2m}+\sigma I_{2mp})+V_{2m+1}\tau_m E^T_m,
\]
and $R^{\sigma}_{0}=V_1\Gamma_{1,1}$. Using this equation and \eqref{OC} relations, the reduced linear system can be written as
\begin{equation}\label{RLS}
    (\mathbb{T}_{2m}+\sigma I_{2mp})Y_{2m}^{\sigma}=E_1\beta_0^{\sigma},\quad \beta_0^{\sigma}=\Gamma_{1,1}.
\end{equation}
Using \eqref{XZ} and \eqref{ZZ} equations, we get the approximate solution
\begin{equation}\label{X2m}
X_{2m}^{\sigma}=X_0^{\sigma}+\mathbb{V}_{2m}(\mathbb{T}_{2m}+\sigma I_{2mp})^{-1}E_1\Gamma_{1,1}.
\end{equation}

 \noindent The following result on  the residual $R_{2m}^{\sigma}$ allows us to stop the iterations without having to compute matrix products with the large matrix $A$ .
 \begin{theorem}
 Let $Y_{2m}^{\sigma}$ be the exact solution of the reduced linear system \eqref{RLS} and let $X_{2m}^{\sigma}$ be the approximate solution of the linear system \eqref{SL} after $m$ steps of the extended block Hessenberg method applied to the pair $(A,R_0)$. Then the residual $R_{2m}^{\sigma}$ satisfies
 \begin{align}\label{ResSL}
     R_{2m}^{\sigma}=-V_{2m+1}\tau_mE^T_mY_{2m}^{\sigma},
 \end{align}
 where $\tau_m,$ $E_m$ are the matrices as defined in \eqref{decompositionAV}.
 \end{theorem}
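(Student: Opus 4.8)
The plan is to derive \eqref{ResSL} by a direct substitution argument, relying on the shifted form of the decomposition \eqref{decompositionAV} together with the fact that $Y_{2m}^{\sigma}$ solves the reduced system \eqref{RLS}. First I would start from the definition of the residual and insert the approximate solution \eqref{X2m} (equivalently \eqref{XZ}--\eqref{ZZ}), writing
\[
R_{2m}^{\sigma}=C-(A+\sigma I_n)X_{2m}^{\sigma}=R_0^{\sigma}-(A+\sigma I_n)\Vm_{2m}Y_{2m}^{\sigma},
\]
where $R_0^{\sigma}=C-(A+\sigma I_n)X_0^{\sigma}$ is the initial residual. The observation that makes everything work is that, by construction, $R_0^{\sigma}=V_1\Gamma_{1,1}=\Vm_{2m}E_1\Gamma_{1,1}$, since $V_1=\Vm_{2m}E_1$ and the normalization \eqref{ComputeV1V2} fixes the leading block.

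Next I would invoke the shifted decomposition, obtained from \eqref{decompositionAV} by adding $\sigma\Vm_{2m}$ to both sides,
\[
(A+\sigma I_n)\Vm_{2m}=\Vm_{2m}(\Tm+\sigma I_{2mp})+V_{2m+1}\tau_m E^T_m,
\]
and multiply it on the right by $Y_{2m}^{\sigma}$. Substituting the result into the expression for $R_{2m}^{\sigma}$ gives
\[
R_{2m}^{\sigma}=R_0^{\sigma}-\Vm_{2m}(\Tm+\sigma I_{2mp})Y_{2m}^{\sigma}-V_{2m+1}\tau_m E^T_m Y_{2m}^{\sigma}.
\]
The final step uses the reduced linear system \eqref{RLS}, namely $(\Tm+\sigma I_{2mp})Y_{2m}^{\sigma}=E_1\Gamma_{1,1}$, so that the middle term collapses to $\Vm_{2m}E_1\Gamma_{1,1}=R_0^{\sigma}$. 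The two copies of $R_0^{\sigma}$ then cancel, leaving exactly $R_{2m}^{\sigma}=-V_{2m+1}\tau_m E^T_m Y_{2m}^{\sigma}$, which is \eqref{ResSL}.

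I do not anticipate a genuine obstacle here: the argument is a short telescoping substitution, and every ingredient is already established. The only points that require care are bookkeeping ones, namely confirming the sign convention $R_0^{\sigma}=C-(A+\sigma I_n)X_0^{\sigma}$ so that the shift enters with a $+$ consistent with \eqref{SL}, and justifying the identity $R_0^{\sigma}=\Vm_{2m}E_1\Gamma_{1,1}$, which is precisely where the choice $\beta_0^{\sigma}=\Gamma_{1,1}$ in \eqref{RLS} and the normalization of the first basis block are used. Once these are in place, the cancellation is immediate and the claimed formula follows.
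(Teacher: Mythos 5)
Your proof is correct and follows exactly the same route as the paper's: substitute $X_{2m}^{\sigma}=X_0^{\sigma}+\mathbb{V}_{2m}Y_{2m}^{\sigma}$ into the residual, apply the shifted form of \eqref{decompositionAV}, invoke the reduced system \eqref{RLS} to collapse the middle term to $V_1\Gamma_{1,1}=R_0^{\sigma}$, and cancel. The only difference is presentational: you make explicit the identities $V_1=\mathbb{V}_{2m}E_1$ and $R_0^{\sigma}=V_1\Gamma_{1,1}$ that the paper uses tacitly (and states with a typo), which is a welcome clarification rather than a deviation.
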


 \begin{proof}
 We use \eqref{RLS} equation in this proof, and the fact that $R^{\sigma}_{0}-V_1\Gamma_{1,1}$, then
 \begin{align*}
     R_{2m}^{\sigma}&=C-(A+\sigma I_n)X_{2m}^{\sigma}=C-(A+\sigma I_n)(X_{0}^{\sigma}+\mathbb{V}_{2m}Y_{2m}^{\sigma})\\
     &=R^{\sigma}_0-(A+\sigma I_n)\mathbb{V}_{2m}Y_{2m}^{\sigma}\\
     &=R^{\sigma}_{0}-\mathbb{V}_{2m}(\mathbb{T}_{2m}+\sigma I_{2mp})Y_{2m}^{\sigma}-V_{2m+1}\tau_mE^T_mY_{2m}^{\sigma}\\
     &=R^{\sigma}_{0}-\mathbb{V}_{2m}E_1\Gamma_{1,1}-V_{2m+1}\tau_mE^T_mY_{2m}^{\sigma}\\
     &=R^{\sigma}_{0}-V_1\Gamma_{1,1}-V_{2m+1}\tau_mE^T_mY_{2m}^{\sigma}\\
      &=-V_{2m+1}\tau_mE^T_mY_{2m}^{\sigma}.
     \end{align*}
     Which completes the proof.
 \end{proof}

\noindent Since all basis vectors $\{V_1,\ldots,V_{2m}\}$ need to be stored, a maximum subspace dimension is usually allowed, but when accuracy of the approximation \eqref{X2m} is not satisfactory at this maximum dimension, the procedure should to be restarted with the current approximate solution as a starting guess, and the new space is generated with the current residual as a starting vector. According to \eqref{ResSL} equation, we observe that   $R_{2m}^{\sigma}\in{\rm range}\{V_{2m+1}\}$. Then, it is possible to restart the algorithm for every some fixed $m$ steps to solve the linear system \eqref{SL} with
\[
V_1=V_{2m+1}, \quad \text{and}\quad
\beta_0^{\sigma}=V_{2m+1}^LR^{\sigma}_{2m}=-\tau_m E^T_mY^{\sigma}_{2m}.
\]
We refer to  \cite{SCS,Simoncini2} for more details on restarting procedure for shifted linear systems.

\begin{algorithm}
\caption{Restarted shifted linear system algorithm using the extended block Hessenberg algorithm (restarted-EBH)}\label{algorithm:Restarted-EBH}
\textbf{Inputs:} Matrix $A$, block vector $C$,  the set of the shifts $\Sigma$,  a desired tolerance $\epsilon$ and an integer $m$.
\begin{enumerate}
\item $[V_1,\beta_0^{\sigma}]=lu(C)$ and set $\Sigma_c=\emptyset$, $X_{2m}^{\sigma}=O_{n\times p}$.
\item While $\Sigma\backslash\Sigma_c\neq \emptyset$
\begin{enumerate}
\item Compute $\mathbb{V}_{2m}$ and $\mathbb{T}_{2m}$ using $m$ steps of EBHA applied to the pair $(A,V_1)$ in Algorithm \ref{algorithm:EBHA}.
\item Solve the reduced shifted linear system  $(\mathbb{T}_{2m}+\sigma I_{2mp})Y^{\sigma}_{2m}=E_1\beta_0^{\sigma},$ for $\sigma\in\Sigma\backslash\Sigma_c$.
\item Compute $\|R_{2m}^{\sigma}\|_F=\|V_{2m+1}\tau_mE^T_mY_{2m}^{\sigma}\|_F$, for $\sigma\in\Sigma\backslash\Sigma_c$.
\item Compute $X_{2m}^{\sigma}=X_{2m}^{\sigma}+\mathbb{V}_{2m}Y_{2m}^{\sigma}$, for $\sigma\in\Sigma\backslash \Sigma_c$.
\item Select the new $\sigma\in\Sigma\backslash\Sigma_c$ such that $\|R_{2m}(\sigma)\|_F<\epsilon$. Update set $\Sigma_c$ of converged shifted linear systems.
\item Set $V_1=V_{2m+1}$ and $\beta_0^{\sigma}=-\tau_mE^T_mY_{2m}^{\sigma}$, for $\sigma\in\Sigma\backslash \Sigma_c.$
\item endwhile
\end{enumerate}
\end{enumerate}
\textbf{Output:} Approximation $X_{2m}^{\sigma}$ of the shifted linear systems \eqref{SL}, $\forall \sigma\in \Sigma$.
\end{algorithm}
\section{Numerical experiments}\label{section:ne}
In this section, we illustrate the performance of the extended block Hessenberg  (EBH) method when applied to reduce the order of large scale dynamical systems. All experiments were carried out in MATLAB R2015a on a computer with an Intel Core i-3 processor and $3.89$ GB of RAM. The computations were done with about $15$ significant decimal digits. In the selected examples, the proposed method is compared with the extended block Arnoldi (EBA) method \cite{AHJ,HJ,Heyouni}. In all numerical examples, the execution time performed is taken from the average of $10$ multiple runs. This is is important to make the timing more trustworthy. 

\subsection{Examples for the approximation of matrix functions \boldmath{$f(A)V$}}
The examples of this subsection  compare the performance of the extended block Hessenberg  (MF-EBH) Algorithm \ref{algorithm:MFEBH}, with the performance of the extended block Arnoldi algorithm (MF-EBA) when applied to the approximation of $f(A)V$. The matrix $A\in\R^{n\times n}$, with $n=5000.$ The initial block vector $V\in\R^{n\times p}$ is generated randomly with uniformly distributed entries in the interval $[0,1]$ and the block size $p$ is $5$. In the tables \ref{table:exampleA1},  and \ref{table:exampleA2}, we display the relative errors $\|\mathbb{I}(f)-\mathbb{I}_{2m}(f)\|/\|\mathbb{I}(f)\|$, and the required CPU time for MF-EBH and MF-EBA; respectively. We also report the ratio of execution times $t(p)/t(1)$, where $t(p)$ is the CPU time for the extended block Hessenberg method and $t(1)$ is the CPU time obtained when applying the extended Hessenberg method \cite{RT} for one right-hand side. This right-hand side $v\in\R^n$ is chosen to be the first column of $V$.  The number of iterations is set to $m=10$ and $m=15$. We used the {\sf funm} function in Matlab, to compute the exact solution $\mathbb{I}(f).$

\textbf{Example 1.}  Let $A=[a_{i,j}]$ be the symmetric positive definite Toeplitz matrix with entries $a_{i,j}=1/(1+|i-j|)$ \cite{JR}. The condition number of this matrix is $50.434$. The approximation errors and CPU times are listed in Table \ref{table:exampleA1} for several functions $f$.

\begin{table}[h]
    \centering
     \caption{Example $1$: Approximation of $f(A)V$ for several functions, $A\in\R^{n\times n}$, $V\in\R^{n\times p}$, $n=5000$ and $p=5$.}
    \begin{tabular}{c|ccc|cc}
    \hline
        \multirow{2}{2cm}{$f(x)$} & \multicolumn{2}{c}{MF-EBH} & \multicolumn{2}{c}{MF-EBA}  \\
        \cline{2-6}
         & Time $(s)$ & t($p$)/t($1$) & Ret. Err & Time $(s)$ & Ret. Err \\
         \hline
         $m=10$ & & & & \\
         \cline{1-1}
         $\exp(x)$ & $7.31$ & $2.32$ & $4.25\cdot 10^{-7}$ & $10.82$ & $1.03\cdot 10^{-7}$\\
         $\sqrt{x}$ & $7.57$ & $2.18$ &$9.78\cdot 10^{-10}$ & $10.45$ & $2.16\cdot 10^{-10}$\\
         $\exp(-\sqrt{x})$ & $10.51$ & $2.41$ & $2.01\cdot 10^{-8}$ & $12.31$ & $1.25\cdot 10^{-8}$\\
         $\log(x)$ & $10.35$ & $2.64$ & $2.94\cdot 10^{-9}$ & $12.56$ & $1.81\cdot 10^{-9}$\\
         $\exp(-x)/x$ & $10.53$ & $2.16$ &$4.29\cdot 10^{-8}$ & $12.27$ & $1.09\cdot 10^{-8}$\\
         \hline
         $m=15$ & & & & \\
         \cline{1-1}
         $\exp(x)$ & $11.03$ &  $2.39$ & $5.06\cdot 10^{-12}$ & $12.30$ & $1.10\cdot 10^{-12}$\\
         $\sqrt{x}$ & $9.32$ & $2.26$ & $3.64\cdot 10^{-14}$ & $12.57$ & $1.56\cdot 10^{-14}$\\
         $\exp(-\sqrt{x})$ & $12.78$ & $2.53$ & $7.94\cdot 10^{-13}$ & $15.35$ & $1.23\cdot 10^{-13}$\\
         $\log(x)$ & $13.98$ & $2.68$ & $1.14\cdot 10^{-13}$ & $15.63$ & $9.62\cdot 10^{-15}$\\
         $\exp(-x)/x$ & $13.34$ & $2.23$ & $2.49\cdot 10^{-13}$ & $15.97$ & $1.52\cdot 10^{-13}$
         \end{tabular}
    \label{table:exampleA1}
\end{table}

\textbf{Example 2.}  The matrix $A$ is a block diagonal with $2\times 2$ blocks of the form
\[
\begin{bmatrix}
a_i\, &\, c\\
-c \,&\, a_i
\end{bmatrix},
\]
in which $c=1/2$ and $a_i=(2i-1)/(n+1)$ for $i=1,\ldots,n/2$ \cite{Saad}. The condition number of this matrix is $3.62$. Results for several functions $f$ are reported in Table \ref{table:exampleA2}.

\noindent As can be seen in these two Tables, that MF-EBH have the best execution time than MF-EBA method for all functions.

\begin{table}[h]
    \centering
    \caption{Example $2$: Approximation of $f(A)V$ for several functions, $A\in\R^{n\times n}$, $V\in\R^{n\times p}$, $n=5000$ and $p=5$.}
    \begin{tabular}{c|ccc|cc}
    \hline
        \multirow{2}{2cm}{$f(x)$} & \multicolumn{2}{c}{MF-EBH} & \multicolumn{2}{c}{MF-EBA}  \\
        \cline{2-6}
         & Time $(s)$ & t($p$)/t($1$) &  Ret. Err & Time $(s)$ & Ret. Err \\
         \hline
         $m=10$ & & & & \\
         \cline{1-1}
         $\exp(x)$ & $0.27$ & $2.26$ & $8.06\cdot 10^{-11}$ & $0.57$ & $2.55\cdot 10^{-11}$\\
         $\sqrt{x}$ & $0.26$ & $2.08$ & $3.97\cdot 10^{-8}$ & $0.52$ & $1.42\cdot 10^{-8}$\\
         $\exp(-\sqrt{x})$ & $0.28$ & $2.29$ & $6.32\cdot 10^{-8}$ & $0.52$ & $2.26\cdot 10^{-8}$\\
         $\log(x)$ & $0.28$ & $2.46$ & $1.27\cdot 10^{-7}$ & $0.45$ & $9.54\cdot 10^{-9}$\\
         $\exp(-x)/x$ & $0.27$ & $2.08$ & $2.56\cdot 10^{-12}$ & $0.5$ & $1.41\cdot 10^{-12}$\\
         \hline
         $m=15$ & & & & \\
         \cline{1-1}
         $\exp(x)$ & $0.47$ & $2.34$ & $1.20\cdot 10^{-14}$ & $0.89$ & $4.47\cdot 10^{-15}$\\
         $\sqrt{x}$ & $0.47$ & $2.18$ & $1.19\cdot 10^{-11}$ & $0.89$ & $3.03\cdot 10^{-12}$\\
         $\exp(-\sqrt{x})$ & $0.55$ & $2.36$  & $1.91\cdot 10^{-11}$ & $0.88$ & $4.87\cdot 10^{-12}$\\
         $\log(x)$ & $0.52$ & $2.53$ & $3.85\cdot 10^{-11}$ & $0.82$ & $9.84\cdot 10^{-12}$\\
         $\exp(-x)/x$ & $0.52$ & $2.14$ & $1.88\cdot 10^{-14}$ & $0.79$ & $9.81\cdot 10^{-15}$
         \end{tabular}
    \label{table:exampleA2}
\end{table}
\textbf{Example 3.}  Let  $A=n^2\, tridiag(-1,2,-1),$ with $n=5000$. The condition number of this matrix is $1.25\cdot 10^7$. In this example, we compare the CPU time and the number of iterations needed by the MF-EBH and the MF-EBA methods so that the relative error reaches $2\cdot 10^{-9}.$ The number of iterations and timings are listed in Table \ref{table:exampleA3}. Timings show the proposed method to be faster than the MF-EBA method, even though the MF-EBA converges in less iterations. To illustrate how stable the MF-EBH method, the plots in Figure \ref{fig:Exemple3 } show the evolution of the approximation errors versus the number of iterations.  The figure demonstrates the stability of the MF-EBH method for this example.

\begin{table}[h]
    \centering
     \caption{Example $3$: CPU time, and number of iterations for Approximation of $f(A)V$ for several functions, $A\in\R^{n\times n}$, $V\in\R^{n\times p}$, $n=5000$ and $p=5$.}
    \begin{tabular}{c|cc|cc}
    \hline
        \multirow{2}{2cm}{$f(x)$} & \multicolumn{2}{c}{MF-EBH} & \multicolumn{2}{c}{MF-EBA}  \\
        \cline{2-5}
         & Time $(s)$ & Iterations  & Time $(s)$ & Iterations \\
         \hline
          $\sqrt{x}$ & $3.46$ & $34$ & $5.08$ & $33$\\
         $\exp(-\sqrt{x})$  & $0.78$ & $8$ & $1.56$ & $7$\\
         $\log(x)$  & $4.49$ & $35$ & $6.16$ & $33$\\
         \hline
         \end{tabular}
    \label{table:exampleA3}
\end{table}

\begin{figure}
    \centering
    \includegraphics[width=1\textwidth,height=7cm]{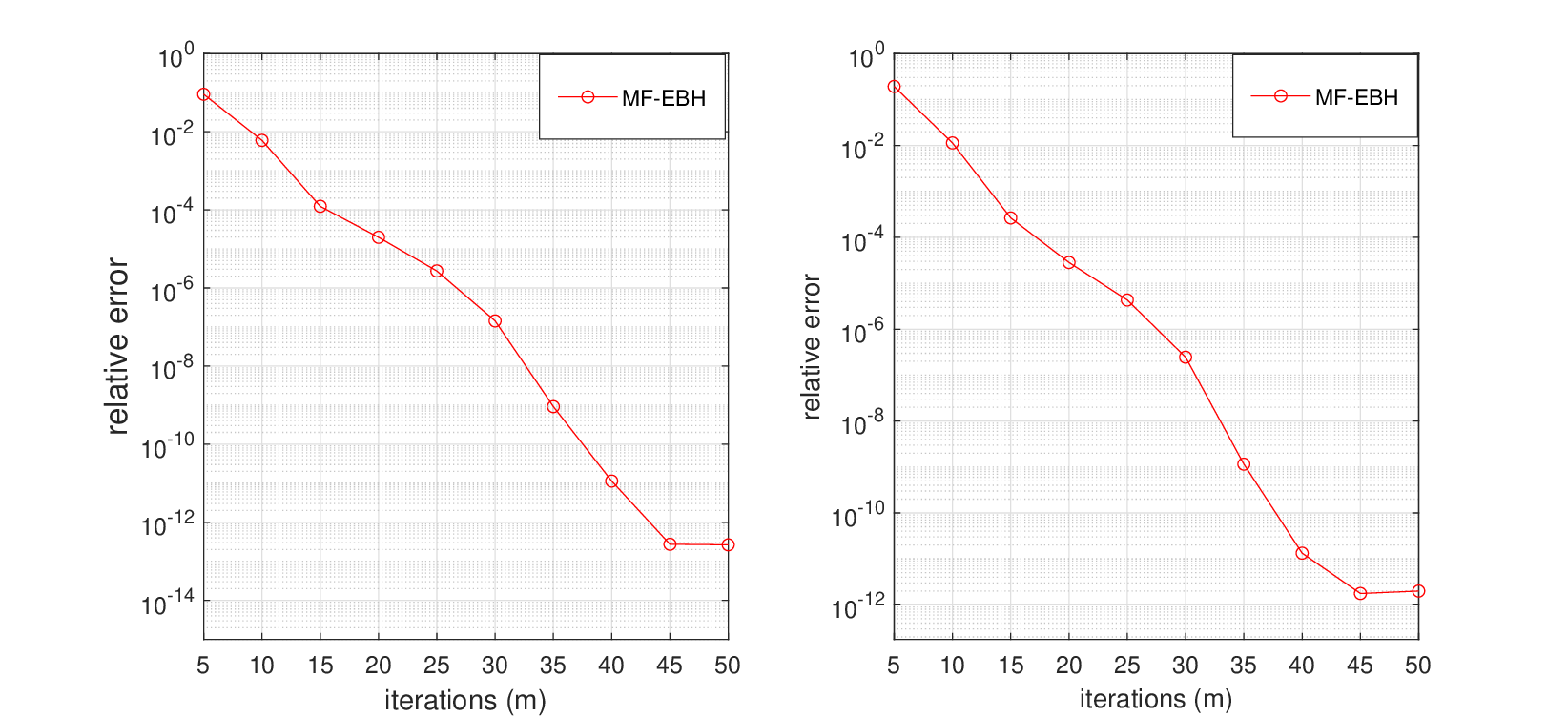}
    \caption{Evolution of the absolute error determined by the MF-EBH method when approximating $\sqrt{A}V$ (left plot) and $\log{(A)}V$ (right plot) }
    \label{fig:Exemple3 }
\end{figure}

\subsection{Examples of the shifted linear systems}
In this subsection, we present some results of solving shifted linear systems in \eqref{SL}. We compare the results obtained by the restarted-EBH Algorithm in Algorithm \ref{algorithm:Restarted-EBH}, the restarted extended block Arnoldi (restarted-EBA) in \cite{Simoncini2} and the Gaussian elimination with partial pivoting method (GE). It is a direct method based on the computation of the LU factorisation of the matrix $A+\sigma I_n$, for all $\sigma\in\Sigma.$ The right-hand side $C$ in \eqref{SL} is chosen  randomly with entries uniformly distributed on $[0,1]$, the block size is set to $p=5$. The shifts $\sigma$ are taken to be $500$ values uniformly distributed in the interval $[0,5]$. In all examples of this subsection, the stopping criteria is set to $R_{2m}\leq 2\cdot 10^{-8}$, where $R_{2m}:=\max\limits_{\sigma\in\{\sigma_1,\ldots,\sigma_{500}\}}\|R_{2m}^{\sigma}\|_F$  and the initial guess $X^{\sigma}_0$ is $0$. The number of iterations is $m=5,10,$ for the both algorithms.

\noindent\textbf{Example $4$.} In this example, we consider two nonsymmetric matrices which coming from the centered finite
difference discretization (CFDD) of the operators

\begin{equation}\label{OpSE}
\begin{array}{ll}
\mathcal{L}_1(u)&=-\Delta u+10u_x,\\
\mathcal{L}_2(u)&=-\Delta u+50(x+y)u_x+50(x+y)u_y.
\end{array}
\end{equation}
The operators $\mathcal{L}_1(u)$ and $\mathcal{L}_2(u)$ are given in \cite{Simoncini2} and \cite{Simoncini3}; respectively.

In Table \ref{table:exampleShiftedSystemsFDM}, we report results for restarted-EBH, and restarted-EBA in terms of the number of restarts ($\#$restarts), CPU time in seconds (time $(s)$) and the norm of the residual ($R_{2m}$). We also report the time obtained when applying the GE method (time $(s)$). We use different  values of the dimension $n$ (the size of the matrix $A$).

\begin{table}[h]
\centering
  \caption{Example $4$:  Shifted solvers for nonsymmetric matrices and different matrix dimensions for the operators given by \eqref{OpSE} }
 \label{table:exampleShiftedSystemsFDM}
    \begin{tabular}{cccccccccc}
    \hline
      Oper. & $n$ & Iter.$(m)$ & \multicolumn{3}{c}{restarted-EBH}   &  \multicolumn{3}{c}{restarted-EBA} & GE  \\
        & & & $\#$restarts & times$(s)$ & $R_{2m}$ & $\#$restarts & times$(s)$ & $R_{2m}$ & times$(s)$\\
         \hline
           $\mathcal{L}_1$ & $10000$ & $5$ & $2$ & $2.88$ & $1.99\cdot 10^{-8}$ & $2$ & $4.92$ & $1.97\cdot 10^{-8}$ & $10.78$\\
            &  & $10$ & $1$ & $2.21$ & $8.80\cdot 10^{-11}$ & $1$ & $3.26$ & $1.49\cdot 10^{-10}$ & \\
                & $22500$  & $5$ & $2$ & $3.62$ & $1.95\cdot 10^{-8}$ & $2$ & $5.96$ & $1.98\cdot 10^{-8}$ & $24.59$\\
            &  & $10$ & $1$ & $3.45$ & $3.02\cdot 10^{-10}$ & $1$ & $5.42$ & $5.79\cdot 10^{-10}$ & \\
             & $40000$  & $5$ & $2$ & $6.83$ & $1.93\cdot 10^{-8}$ & $2$ & $8.76$ & $1.94\cdot 10^{-8}$ & $34.27$\\
            &  & $10$ & $1$ & $7.91$ & $8.11\cdot 10^{-10}$ & $1$ & $9.48$ & $1.40\cdot 10^{-9}$ &\\
             & $625000$  & $5$ & $2$ & $10.12$ & $1.93\cdot 10^{-8}$ & $2$ & $12.81$ & $1.97\cdot 10^{-8}$ & $86.61$ \\
            &  & $10$ & $1$ & $12.85$ & $1.21\cdot 10^{-9}$ & $1$ & $16.24$ & $5.37\cdot 10^{-9}$ & \\
              $\mathcal{L}_2$ & $10000$ & $5$ & $1$ & $1.41$ & $7.90\cdot 10^{-9}$ & $1$ & $2.23$ & $1.39\cdot 10^{-8}$ & $8.48$ \\
            &  & $10$ & $1$ & $2.74$ & $6.85\cdot 10^{-11}$ & $1$ & $3.44$ & $6.20\cdot 10^{-11}$ & \\
                & $22500$  & $5$ & $2$ & $4.76$ & $1.97\cdot 10^{-8}$ & $1$ & $4.91$ & $1.91\cdot 10^{-8}$ & $18.52$\\
            &  & $10$ & $1$ & $3.87$ & $1.71\cdot 10^{-10}$ & $1$ & $6.39$ & $3.57\cdot 10^{-10}$ & \\
             & $40000$  & $5$ & $2$ & $5.65$ & $1.97\cdot 10^{-8}$ & $2$ & $8.03$ & $1.98\cdot 10^{-8}$ & $30.82$\\
            &  & $10$ & $1$ & $7.28$ & $4.45\cdot 10^{-10}$ & $1$ & $10.71$ & $8.75\cdot 10^{-10}$ & \\
             & $625000$  & $5$ & $2$ & $10.03$ & $1.99\cdot 10^{-8}$ & $2$ & $12.36$ & $1.99\cdot 10^{-8}$ & $76.25$\\
            &  & $10$ & $1$ & $11.41$ & $6.27\cdot 10^{-10}$ & $1$ & $16.68$ & $1.28\cdot 10^{-9}$ & \\
           \hline
    \end{tabular}
   \end{table}

\textbf{Example $5$.} In this example, we consider three real matrices $add32,epb1$ and $memplus$ which can be found in the Suite Sparse Matrix Collection \cite{DH}. These matrices are considered as a benchmark test. Some details on these matrices
are presented in Table \ref{tab:infomatrices}, including the condition number, and the sparsity of each matrix. The sparsity is defined as the ratio between the number of nonzero elements and the total number of elements, $n^2$. Results of the restarted-EBH, restarted-EBA methods and GE methods are stated in Table \ref{table:exampleShiftedSystemsSuite}. As indicated from Tables \ref{table:exampleShiftedSystemsFDM} and \ref{table:exampleShiftedSystemsSuite}, the restarted-EBH is much better in terms of the CPU times than the restarted-EBA. We also observe that the GE method requires highest CPU time than the restarted-EBH and the restarted-EBA methods.

\begin{table}[h]
    \centering
     \caption{matrix properties}
    \begin{tabular}{cccc}
    \hline
        Matrix & size $n$ & $Cond(A)$ & Sparsity  \\
        \hline
        $add32$ & $4960$ & $1.36\cdot 10^2$ & $8.0678\cdot 10^{-4}$ \\  
        $epb1$ & $14734$ & $5.94\cdot 10^3$ & $4.3785\cdot 10^{-4}$ \\  
        $memplus$ & $17758$ & $1.29\cdot 10^5$ &$3.1441\cdot 10^{-4}$ \\ 
        \hline
    \end{tabular}
    \label{tab:infomatrices}
\end{table}

\begin{table}[h]
\centering
  \caption{Example $5$:  Shifted solvers for nonsymmetric matrices from the Suite Sparse Matrix Collection matrices }
 \label{table:exampleShiftedSystemsSuite}
    \begin{tabular}{ccccccccc}
    \hline
      Matrix  & Iter. $(m)$ & \multicolumn{3}{c}{restarted-EBH}   &  \multicolumn{3}{c}{restarted-EBA} & GE \\
        &  & $\#$restarts & times$(s)$ & $R_{2m}$ & $\#$restarts & times$(s)$ & $R_{2m}$ & times$(s)$\\
         \hline
           $add32$ &  $5$ & $4$ & $1.95$ & $1.92\cdot 10^{-8}$ & $4$ & $2.55$ & $1.93\cdot 10^{-8}$ & $9.61$\\
        $n=4960$    &   $10$ & $2$ & $1.80$ & $9.36\cdot 10^{-8}$ & $2$ & $2.43$ & $9.29\cdot 10^{-9}$ \\
         \hline
               $epb1$ &  $5$ & $11$ & $7.67$ & $1.83\cdot 10^{-8}$ & $9$ & $9.07$ & $1.99\cdot 10^{-8}$& $82.19$\\
        $n=14734$    &   $10$ & $5$ & $8.06$ & $1.90\cdot 10^{-8}$ & $4$ & $10.56$ & $1.94\cdot 10^{-8}$& \\
         \hline
         $memplus$ &  $5$ & $23$ & $10.54$ & $1.98\cdot 10^{-8}$ & $20$ & $16.57$ & $1.99\cdot 10^{-8}$& $94.05$\\
        $n=17758$    &   $10$ & $10$ & $8.50$ & $1.99\cdot 10^{-8}$ & $7$ & $13.31$ & $1.99\cdot 10^{-8}$ &\\
           \hline
    \end{tabular}
   \end{table}

\section{Conclusion}\label{section:conclusion}
This paper presents the extended block Hessenberg method with its theoretical properties for the approximation of $f(A)V.$ We also gave algorithms based on the proposed method for solving  shifted linear systems with multiple right hand sides.  The numerical results show that the proposed method requires less CPU time, than the extended block Arnoldi method for functions and well-known benchmark matrices considered in all examples.

\newpage


\begin{thebibliography}{99}

\bibitem{AHJ}
O. Abidi, M. Heyouni and K. Jbilou, On some properties of the extended block and global Arnoldi methods with applications to model reduction, Numerical Algorithms, 75 (2017), 285--304.

\bibitem{AFIJ}
 H. Abou-Kandil, G. Freiling, V.  Ionescu and  G. Jank, Matrix Riccati Equations in Control and Systems Theory, in Systems \& Control Foundations \& Applications, Birkhauser, (2003).

\bibitem{AHS}
M. Addam, M. Heyouni, and H. Sadok, The block Hessenberg process for matrix equations, Electron. Trans. Numer. Math., 46 (2017) 460--473.

\bibitem{ATG}
S. Amini, F. Toutounian, and M. Gachpazan, The block CMRH method for solving
nonsymmetric linear systems with multiple right-hand sides, J. Comput. Appl. Math., 337 (2018) 166--174.


\bibitem{BGMSUX}
S. Baroni, R. Gebauer, O. B. Malcioglu, Y. Saad, P. Umari, and J. Xian, Harnessing molecular excited states with Lanczos chains, J. Phys. Condens. Mat., 22 , Art. Id. 074204, 8 pages (2010).


\bibitem{Bentbib}
A. Bentbib, K.  Jbilou, E. M. Sadek, On some Krylov subspace based methods for large-scale nonsymmetric algebraic Riccati problems. Comput. Math. Appl. 2015, 2555--2565.

\bibitem{Bentbib1}
 A.H. Bentbib, K. Jbilou, E.M. Sadek,  On some extended block Krylov based methods for large scale nonsymmetric Stein matrix equations. Mathematics 2017.

\bibitem{Datta}
B. N. Datta, Large-scale matrix computations in control, Appl. Numer. Math., 30 (1999) 53--63.

\bibitem{Datta2}
B. N. Datta, Krylov Subspace Methods for Large-Scale Matrix Problems in
Control, Future Gener. Comput. Syst. 19 (2003) 1253--1263.

\bibitem{DH}
T. Davis and Y. Hu,  The SuiteSparse Matrix Collection, https://sparse.tamu.edu.

\bibitem{DK}
V. Druskin, and L. Knizhnerman, Extended Krylov subspaces: approximation of the matrix square root and related functions, SIAM J. Matrix Anal. Appl., 19 (1998),  755--771.



\bibitem{Estrada}
E. Estrada, The structure of complex networks: theory and applications. Oxford University Press, Oxford, (2011).


\bibitem{FRRS}
C. Fenu, L. Reichel, G. Rodriguez, and H. Sadok, GCV for Tikhonov regularization by partial SVD, BIT, 57, 1019–-1039 (2017).


\bibitem{FJ}
M. Frangos, and I.M. Jaimoukha, Adaptive rational interpolation: Arnoldi and Lanczos-like equations, Eur. J. Control., 14 (2008) 342--354.


\bibitem{HaJ}
M. Hached and K. Jbilou, Computational Krylov-based methods for large-scale differential Sylvester matrix problems. Numer. Linear Algebra Appl. 255, e2187 (2018).

\bibitem{HAN}
I. Han, D. Malioutov, and J. Shin, Large-scale log-determinant computation through
stochastic Chebyshev expansions, in Proceedings of The 32nd International Conference on
Machine Learning, F. Bach and D. Blei, eds., Lille, France, 2015, JMLR Workshop and
Conference Proceedings, 37 (2015) 908--917.

\bibitem{Hansen}
P. C. Hansen, Rank-deficient and discrete ill-posed problems. SIAM, Philadelphia, (1998).


\bibitem{Heyouni}
M. Heyouni, Extended Arnoldi methods for large low-rank Sylvester matrix equations, Appl.
Numer. Math., 60 (2010) 1171--1182.

\bibitem{HJ2}
M. Heyouni and K. Jbilou, An extended block Arnoldi algorithm for large-scale solutions of
the continuous-time algebraic Riccati equation, Electron. Trans. Numer. Math., 33 (2009) 53--62.

\bibitem{HJ}
M. Heyouni, and K. Jbilou, Matrix Krylov subspace methods for large scale
model reduction problems, App. Math. Comput., 181 (2006) 1215--1228.

\bibitem{HJMT}
M. Heyouni, K. Jbilou, A. Messaoudi, and K. Tabaa, Model reduction
in large scale MIMO dynamical systems via the block Lanczos method, Comp. Appl. Math., 27 (2008) 211--236.

\bibitem{Higham}
N. J. Higham, Functions of matrices: theory and computation. SIAM, Philadelphia, (2008).

\bibitem{HV}
W. Hundsdorfer and J. G. Verwer, Numerical Solution of Time-Dependent Advection-
Diffusion-Reaction Equations, Springer Verlag, 2003.

\bibitem{JR}
C. Jagels, L. Reichel, The extended Krylov subspace method and orthogonal Laurent polynomials.
Lin. Alg. Appl., 431 (2009), 441--458.



\bibitem{KS}
L. Knizhnerman, and V. Simoncini, A new investigation of the extended Krylov subspace method for
matrix function evaluations, Numer. Linear Algebra Appl., 17 (2010), pp. 615--638.

\bibitem{NBS}
T. T. Ngo, M. Bellalij, and Y. Saad, The trace ratio optimization problem, SIAM Rev., 54 (2012), 545–-569.





\bibitem{Penzl}
T. Penzl, LYAPACK: A MATLAB toolbox for large Lyapunov and Riccati equations, model reduction problems, and linear-quadratic optimal control problems, software available at https://www.tu-chemnitz.de/sfb393/lyapack/.


\bibitem{RT}
Z. Ramezani, and F. Toutounian, Extended and rational Hessenberg methods for the evaluation of matrix functions, BIT Numer. Math., 59 (2019) 523–-545.



\bibitem{Saad}
Y. Saad, Analysis of some Krylov subspace approximations to the matrix exponential operator, SIAM
J. Numer. Anal., 29 (1992) 209–-228.

\bibitem{SCS}
Y. Saad, J. Chelikowsky, and S. Shontz, Numerical methods for electronic structure calculations of materials, SIAM Rev., 52 (2010) 3--54.

\bibitem{Sadok}
 H. Sadok, CMRH: a new method for solving nonsymmetric linear systems based on the Hessenberg reduction algorithm. Numer. Algorithms 20 (1999) 303--321.

\bibitem{Simoncini}
V. Simoncini, A new iterative method for solving large-scale Lyapunov matrix equations,
SIAM J. Sci. Comput., 29 (2007), 1268--1288.

\bibitem{Simoncini2}
V. Simoncini, Extended Krylov subspace for parameter dependent systems. Appl. Numer. Math., 60 (2010) 550--560.

\bibitem{Simoncini3}
V. Simoncini, Restarted full orthogonalization method for shifted linear systems. BIT Numer. Math.,
43 (2003) 459–466.

\bibitem{Wilkinson}
J. H. Wilkinson, The Algebraic Eigenvalue Problem, Clarendon Press, Oxford, 1988.


\end{thebibliography}
\end{document}